\newif\ifarxiv
\numberwithin{equation}{section} 
\newcommand\coloneq{\coloneqq}
\newcommand\eqcolon{\eqqcolon}
\newcommand\real{\mathbb{R}}
\newcommand\naturals{\mathbb{N}}
\newcommand\trans{\mathsf{T}}    
\newcommand\iid{{\emph{iid}}\xspace}
\renewcommand{\vec}[1]{{\boldsymbol{#1}}} 
\newcommand{\email}[1]{\url{#1}}
\newcommand\cov{\operatorname{Cov}}
\newcommand\Nrm{\operatorname{N}} 
\newcommand\tstep{{\Delta t}}
\title{Langevin equations for landmark image registration  with uncertainty	\footnote{This work was partially supported by the LMS Scheme 7 grant SC7-1415-09.}}
\author{Stephen Marsland\footnote{Massey University \email{s.r.marsland@massey.ac.nz}} \and Tony
	Shardlow\footnote{Department of Mathematical Sciences, University of Bath, Bath BA2 7AY, UK \email{t.shardlow@bath.ac.uk}}}
\newenvironment{keywords}{%
	\small	\quotation{\bfseries Keywords: }}%
{\endquotation}
\newenvironment{AMS}{%
	\small	\quotation{\bfseries AMS subject classifications: }}%
{\endquotation}
\begin{document}
\maketitle
\begin{abstract}
Registration of images parameterised by landmarks provides a useful method of describing shape variations by computing the minimum-energy time-dependent deformation field that flows one landmark set to the other. This is sometimes known as the geodesic interpolating spline and can be solved via a Hamiltonian boundary-value problem to give a diffeomorphic registration between images. However, small changes in the positions of the landmarks can produce large changes in the resulting diffeomorphism. We formulate a Langevin equation for looking at small random perturbations of this registration. The Langevin equation and three computationally convenient approximations are introduced and used as prior distributions. A Bayesian framework is then used to compute a posterior distribution for the registration, and also to formulate an average of multiple sets of landmarks.
\end{abstract}

\begin{keywords}
image registration, landmarks, shape, Bayesian statistics, SDEs, Langevin equation
\end{keywords}

\begin{AMS}
	92C55, 82C31, 34A55
\end{AMS}
\section{Introduction}

The mathematical description of shape and shape change has become an area of significant research interest in recent years, not least because of its applications in Computational Anatomy, where variations in the appearance of objects in medical images are described mathematically in the hope that their change can be linked to disease progression.
When two images are topologically equivalent, they can be brought into alignment (registered) by deforming one of the images without tearing or folding, so that their appearance matches as closely as possible. This can be formulated mathematically by taking two images $I, J \colon B \to \real$ (for some physical domain $B \subset \real^d$) that act as reference and target respectively. (In medical imaging, these are typically greyscale images.) Image $I$ is then deformed by some diffeomorphism  $\vec\Phi\colon B\to B$ such that $I \circ \vec\Phi^{-1}$ and $J$ are as close as possible according to some model of similarity. In addition to defining similarity, the metric on the diffeomorphism group also has to be selected; the typical setting is to use the right-invariant $H_{\alpha}^1$ metric, which leads to the so-called EPDiff equation~\citep{HolmMarsden05}. We can also define a `bending energy' of $\vec\Phi$ in analogy to the thin-plate spline~\citep{Duchon,Bookstein}. For a general treatment and an overview of the subject, see the monograph \citep{Younes} and references therein.

Similarity can be understood as a norm on the images $\Vpair{I\circ \vec \Phi^{-1}-J}$, in which case a common choice is the sum-of-squares of pixel values, although there are plenty of other options (see e.g.,~\cite{Modersitzki}). Alternatively, similarity can be expressed by a set of landmarks that identify corresponding points on each image. Our focus is on the second of these two methods. Specifically, we consider a set of
landmarks on the reference and target images, $\vec q_i^r$ and $\vec q_i^t$, for $i=1,\dots,N$ in
$B$ and we aim to find $\vec\Phi$ such that $\vec\Phi(\vec q_i^r)=\vec q_i^t$.
Obviously, landmarks need to correspond between the images, and this is a difficulty with landmark-based methods whether the landmarks are selected manually or automatically (for example, by an algorithm that looks for points in the images that should be well-defined such as points of maximum curvature or minimum intensity). In either case, it is easy for errors to be made so that points that should be in correspondence are not, or where there is some random error in the positioning of the landmark with respect to the point it is intended to mark. For humans, marking up points on objects consistently is particularly difficult, and there is experimental evidence that lack of correspondence between pairs of landmarks can substantially effect the diffeomorphisms that are identified in order to match the images, see for example~\citep{Marsland04a}. We provide a solution to this problem based on a Bayesian formulation of the landmark matching problem.


In this paper, we parameterise the diffeomorphisms by
time-dependent deformation fields
$\vec v\colon [0,1]\times B\to \real^d$ and define
$\vec \Phi(\vec Q)=\vec q(1)$ for $\vec Q\in B$, where
$\vec q(t)$ for $t\in[0,1]$ satisfies the initial-value problem
\begin{equation}
\frac{d\vec q}{dt}%
=\vec v(t,\vec q(t)),\qquad %
\vec q(0)=\vec Q.\label{eq:a}
\end{equation}
The bending energy of $\vec \Phi$ is defined via a norm on the deformation field:
\begin{equation}
\operatorname{Energy}(\vec \Phi)%
\coloneq\frac12\norm{\vec v}^2,\qquad
\norm{\vec v}\coloneq\pp{ \int_0^1 \norm{{\cal L}
    \vec v(t,\cdot)}_{L^2(B, \real^d)}^2\,dt}^{1/2},\label{eq:normphi:normphi}
\end{equation}
for a differential operator ${\cal L}$ (for example,
$\mathcal{L}$ equals the Laplacian $\Delta$ with clamped-plate boundary
conditions~\citep{marsland2002clamped}).

The case where landmarks are fully observed is well-studied
and the solution is given by the following boundary-value
problem: let $G$ be the Green's function associated to the
operator ${\cal L}^2$, and let $\vec p_i(t), \vec q_i(t)$
satisfy the Hamiltonian boundary-value problem
\begin{equation}\label{eq:ham}
\frac{d\vec p_i}{dt}%
=-\nabla_{\vec q_i}H,\qquad%
\frac{d\vec q_i}{dt}%
=\nabla_{\vec p_i} H,
\end{equation}
subject to $\vec q_i(0)=\vec q_i^r$ and $\vec q_i(1)=\vec
q_i^t$ for the Hamiltonian $H\coloneq\frac 12\sum_{i,j=1}^N
\vec p_i^\trans \vec p_j G(\vec q_i,\vec q_j)$. Here $\vec
p_i$ are known as generalised momenta. The diffeomorphism
$\vec\Phi$ is now defined by \cref{eq:a} with
\begin{equation}%
\label{eq:a1}%
\vec v(t,\vec q)%
=\sum_{i=1}^N \vec p_i(t)%
G(\vec q,\vec q_i(t)).
\end{equation}
In general, $G$ is defined directly rather than by specifying
the Green's functions of a known $\mathcal{L}$. In our
experiments, we take the Gaussian function
$G(\vec q_1,\vec q_2)=\exp(-(\Vpair{\vec q_1-\vec
  q_2}/\ell)^2)$ for a length scale $\ell$.  For smooth
choices of $G$ such as this, $\vec\Phi$ is a continuously
differentiable function. It is invertible by
reversing the direction of the flow and hence
$\vec\Phi\colon B\to B$ is a diffeomorphism. See for example
\citep{Marsland2006} and, in more general situations,
\citep{McLachlan07a,Holm04}.

Our focus in this paper is to treat uncertainty around landmark positions and
sensitivity of the diffeomorphism to noise. To study this
problem, we introduce a Bayesian formulation and define
prior distributions on the set of diffeomorphisms. We then
condition the prior on noisy observations of the
landmarks to define a posterior distribution. 

The choice of prior distribution is an important consideration, and we make a practical choice
that ensures that diffeomorphisms that have less bending energy are preferred.  This is
the Gibbs canonical distribution, which also has the benefits that both ends of the path are treated equally
and it has a time reversal symmetry (i.e.,  the Gibbs distribution is invariant under change of variable $t\mapsto 1-t$).

We consider Langevin-type perturbations of \cref{eq:ham}, which
have the Gibbs distribution $\exp(-\beta H)$ (with inverse
temperature $\beta>0$) as an invariant measure. The advantage
now is that, with suitable initial data, the solutions of
the Langevin equation $[\vec p_i(t), \vec q_i(t)]$ all
follow the same distribution $\exp(-\beta H)$ for
$t\in[0,1]$.  

It can be seen that diffeomorphisms with lower bending energy are preferred by considering the Hamiltonian using \cref{eq:a1}:
\begin{align*}
  H(\vec p_i(t),\vec q_i(t))%
  =&\frac 12  \sum_{j=1}^N \vec p_j(t)^\trans \vec v(t, \vec q_j(t))
     =\frac 12 \sum_{j=1}^N \int_B \vec p_j(t)^\trans \delta_{\vec
     q_j(t)}(\vec x)\vec v(t, \vec x)\,d\vec x\\ %
  \intertext{(if $\mathcal{L}^2 G=\delta$ and $\mathcal{L}$ is self
  adjoint)}%
  =&\frac 12   \ip{ \mathcal{L}^2 \vec v(t,\cdot),%
     \vec v(t, \cdot)}_{L^2(B,\real^d)} %
     = \frac 12  \norm{\mathcal{L}
     \vec v(t,\cdot)}^2_{L^2(B,\real^d)}.
\end{align*}
Hence, $\int_0^1 H(\vec p_i(t),\vec q_i(t))\,dt= \operatorname{Energy}(\vec \Phi)$ and  we see that
diffeomorphisms $\vec\Phi$ with less bending energy  are associated to paths $[\vec p_i(t), \vec q_i(t)]$ that have a larger
density  under the Gibbs measure $\exp(-\beta H)$.

\subsection{Previous work}\label{sec:pw}

We are aware of three papers that have looked at image
registration in the presence of noise. The most similar to
ours is \citep{trouve10:_shape}, who imagine that the
trajectories $\vec q_i(t)$, for $t\in[0,1]$ and $i=1,\dots,N$,
are noisy observations of some true trajectories $\vec
Q_i(t)$. Specifically, they wish to minimise
\[
\int_0^1 \norm{\mathcal{L}\vec v(t,\cdot)}_{L^2(B,\real^d)}^2\,dt%
+{\sigma}\sum_{i=1}^N \int_0^1 \norm{\vec q_i(t)-\vec Q_i(t)}^2\,dt,
\]
for a parameter $\sigma>0$.
The first term corresponds to a bending energy and second
penalises deviations from $\vec Q_i(t)$. This leads to a
controlled Hamiltonian system
\[
\frac{d\vec p_i}{dt}%
= -\nabla_{\vec q_i} H+ \sigma(\vec q_i-\vec Q_i(t)),\qquad
\frac{d\vec q_i}{dt}%
= \nabla_{\vec p_i} H.
\]
If a white-noise model is assumed for the observation error
$\vec q_i(t)-\vec Q_i(t)$, this gives the SDE
\begin{equation}\label{tush}
{d\vec p_i}%
= -\nabla_{\vec q_i} H\,dt+ \sigma \,d\vec W_i(t),\qquad
\frac{d\vec q_i}{dt}%
= \nabla_{\vec p_i} H.
\end{equation}
This system is identical to \cref{eq:11}, except that no
dissipation is included and therefore it will not have a Gibbs'
distribution as invariant measure.

In \citep{MR3042087} registrations where curves are
  matched (in two dimensions) are studied. A set of discrete points is
  defined on one curve and noisy observations are made on the
  second. Registrations are defined by an initial
  momenta and, to match curves rather than points,
  reparameterisations of the curve are also included. A
  Gaussian prior distribution is defined on the joint space
  of initial momenta and reparameterisations. Observations
  are made with independent Gaussian noise. The authors
  provide an MCMC method for sampling the posterior
  distribution. Hamiltonian equations are used to define the
  diffeomorphism and no noise is introduced along the
  trajectories. In the case of landmark matching, there is
  no advantage to introducing a prior distribution on the
  initial momentum as the data specifies the initial
  momentum completely. For noisy landmark matching, the
  approach has value, being simpler than the Langevin
  equations, but the results will depend on which end the
  prior distribution is specified.

A method to include stochasticity into the Large Deformation Diffeomorphic Metric Mapping (LDDMM) 
framework of image registration (see \citep{Younes} for details) is presented in \citep{Arnaudon16}. In this
approach, noise is introduced into the time-dependent deformation field 
from the start point to the end point, leading to a stochastic version of the EPDiff equations. The authors
also introduce an EM algorithm for estimating the noise parameters based on data. 
The approach is based on two other papers of relevance, which add cylindrical noise to
the variational principles of systems of evolutionary PDEs. By taking
the system in variational form, this introduces noise perturbations
into the advection equation (which corresponds to \cref{eq:a1}). To preserve the conservation laws encoded in the PDEs, the
$\vec p$ update equations are left unchanged.  The resulting trajectories
in $\vec q_i(t)$ have the same regularity as Brownian motion and satisfy Stratonovich SDEs, which are invariant to the relabelling Lie group.
The approach was originally
developed for the Euler equations for an ideal fluid in~\citep{Holm15}, and
was extended to the Euler--Poincar\'{e} (EPDiff) equations in~\citep{Holm16}.
While their examples are for soliton dynamics in one spatial dimension, under
particular choices of metric on the diffeomorphism group, the equations of
image deformation are also EPDiff equations, hence the work in  \citep{Arnaudon16}. 



%

\subsection{Organisation}
This paper is organised as follows.  Our Langevin equations
are described in \S\ref{sec:lang} and some basic theory
established. Unfortunately, these Langevin equations are hypoelliptic and the Hamiltonian is not separable, making
the equations
difficult to work with numerically. Therefore, in
\S\ref{sec:appr_lang}, we introduce three numerically
convenient prior distributions based on the Langevin
equation.  \S\ref{sec:bayes} formulates inverse
problems based on the prior distributions. Two are image
registrations given noisy observations of the landmarks; the
other asks for the average position of a family of
landmark sets. This section includes numerical
experiments demonstrating our method on a variety of
simple curve registrations. Further simulations and examples
are given in the Supplementary Material.

\subsection{Notation} \label{notation}
We denote the Euclidean norm on $\real^d$ by
$\Vpair{\vec{x}}=\sqrt{\vec x^\trans \vec x}$ and the $d\times d$ identity matrix by $I_d$. For a subset
$B$ of $\real^d$, $L^2(B,\real^d)$ is the usual Hilbert space of
square-integrable functions from $B\to \real^d$ with inner product
$\ip{\vec f, \vec g}_{L^2(B,\real^d)}=\int_B \vec
f(\vec x)^\trans \vec g(\vec x)\,d\vec{ x} $. We often work
with position vectors $\vec q_i\in B \subset \real^d$ and
conjugate momenta $\vec p_i\in \real^d$ for
$i=1,\dots,N$. We denote the joint vector $[\vec
p_1,\dots,\vec p_N]$ by $\vec p\in \real^{dN}$ and similarly
for $\vec q\in\real^{dN}$. The combined vector $[\vec p,\vec
q]$ is denoted $\vec z\in \real^{2dN}$. For a symmetric and positive-definite function $G\colon \real^d\times \real^d\to \real$, let $\mathcal{G}(\vec q)$ denote the $N\times N$ matrix with entries $G(\vec q_i,\vec q_j)$.

\section{Generalised Langevin equations}\label{sec:lang}

The classical landmark-matching problem can be solved as
a Hamiltonian boundary-value problem. The dynamics in a
Hamiltonian model have constant energy as measured by
$H$. Instead, we connect the system to a heat bath and %
look at constant-temperature dynamics. We consider a heat bath
with inverse temperature $\beta$. One method of
constant-temperature particle dynamics is the Langevin
equation. That is, we consider the system of stochastic ODEs on $\real^{2dN}$
given by
  \begin{equation}
    \label{eq:11}
    d\vec p_i%
    = \Big[-\lambda \nabla_{\vec p_i} H- \nabla_{\vec
      q_i}H\Big]\,dt%
    +
    \sigma\, d \vec W_i(t),\qquad
    \frac{d\vec q_i}{dt}%
    =  \nabla_ {\vec p_i}H
  \end{equation}
  for a dissipation $\lambda>0$ and diffusion
  $\sigma>0$. Here $\vec W_i(t)$ are \iid $\real^d$ Brownian
  motions.  For $\beta=2\lambda/\sigma^2$,  a potential $V\colon \real^{d N}\to \real$,  and
  $H=\frac 12 \vec p^\trans \vec p +V(\vec q)$,
  \cref{eq:11} is the classical Langevin equation where the
  marginal invariant distribution for $\vec p$ is
  $\Nrm(\vec 0,\beta^{-1} I_{dN})$ and hence the average temperature
  $\frac 1d\mean{\vec p_i^\trans \vec p_i}$ per degree of
  freedom is the constant $\beta^{-1}$.  Let
  $[\vec p_i(t), \vec q_i(t)]$ for $t\in[0,1]$ satisfy
  \cref{eq:11} and define $\vec\Phi(\vec Q)$ as in
  \cref{eq:a,eq:a1}. Notice that
  $\vec \Phi(\vec q_i(0))=\vec q_i(1)$.  In perturbing
  \cref{eq:11} from \cref{eq:ham}, only the momentum
  equation is changed, so the equations for $\vec q$ are
  untouched and are consistent with the definition of
  $\vec v(t,\vec q)$ and hence $\vec \Phi$.

  The solution of \cref{eq:11} is related to \cref{tush} by
  a Girsanov transformation. Let $\pi$ and $\nu$ be the
  distribution on the path space $C([0,1],\real^{2dN})$ of
  \cref{eq:11} and \cref{tush} respectively. Then, for
  $\vec {z}=[\vec{ p},\vec {q}	]$,
  \[
    d\pi(\vec z)=\frac{1}{\phi(\vec z)} d\nu(\vec z),
  \]
  where
  \[
    \log(\phi(\vec z))%
    =\sum_{i=1}^N \bp{\frac{\lambda}{\sigma}\int_0^1  \vec p_i(t)^\trans  d\vec W_i(t)%
      -\frac {\lambda^2}{2\sigma^2} \int_0^1  \norm{\vec p_i(t)}^2\,dt};
  \]
  see \citep[Lemma 5.2]{Hairer2007-sv}.

  To define a distribution on the family of diffeomorphisms,
  it remains to choose initial data. If we specify a
  distribution on $[\vec p, \vec q]$ at $t=0$,
  \cref{eq:11} implies a distribution on the paths and hence
  on $\vec \Phi$ via \cref{eq:a,eq:a1}. The obvious choice
  is the Gibbs distribution
  $\exp(-\beta H)$. If $\sigma^2 \beta=2\lambda$ (the
  fluctuation--dissipation relation), then the Gibbs distribution is an invariant
  measure of \cref{eq:11}. To see this, the generator of
  \cref{eq:11} is
\[
{L}%
=\nabla_{\vec p}H\cdot\nabla_{\vec q}
+(-\lambda \nabla_{\vec p}H-\nabla_{\vec q}H)\cdot\nabla_{\vec p}%
+ \frac 12  \sigma^2 \nabla^2_{\vec p}
\]
and its adjoint
\[
{L}^*\rho%
=-\nabla_{\vec q}\cdot((\nabla_{\vec p}H) \rho)
-\nabla_{\vec p}\cdot((-\lambda \nabla_{\vec p}H-\nabla_{\vec q}H)\rho)
+ \frac 12  \sigma^2 \nabla^2_{\vec p}\rho.
\]
The Fokker--Planck equation for the pdf $\rho(\vec p, \vec q, t)$ is
\begin{align*}
\frac{\partial \rho}{\partial t}%
&= -\nabla_{\vec q} \rho
\cdot \nabla_{\vec p} H
+\pp{\lambda
 \nabla_{\vec p} H\cdot \nabla_{\vec p}
+ \nabla_{\vec p}\cdot \lambda\nabla_{\vec p} H} \rho
+\nabla_{\vec p}\rho \cdot \nabla_{\vec q} H
+\frac12 \sigma^2 \nabla_{\vec p}^2  \rho.
\end{align*}
Put $\rho=e^{-\beta H}$, to see
\begin{align*}
\frac{\partial \rho}{\partial t}%
&=
(\beta\nabla_{\vec q} H)\cdot
\nabla_{\vec p}H\, \rho%
+(-\lambda\nabla_{\vec p} H\cdot
\beta\nabla_{\vec p} H +  \nabla_{\vec p}\cdot\lambda\nabla_{\vec p}H)\rho
-\beta \nabla_{\vec p}H \cdot \nabla_{\vec q} H \rho \\
&\qquad+ \frac 12
 \sigma^2(-\beta \nabla^2_{\vec p}H + \beta^2 \nabla_{\vec p}
H \cdot \nabla_{\vec p} H)\rho.
\end{align*}
Then, $\partial \rho/\partial t=0$ if
$\sigma^2 \beta=2\lambda$ and $\rho$ is an invariant
measure. In some cases, it can be shown additionally that
$\rho$ is a probability distribution.  When $B$ is bounded
(as is usually the case for images), the phase space is
compact in position space and, if  $G$ is a
uniformly positive-definite function, $\exp(-\beta H)$ can
be rescaled to be a probability measure.
This happens for the clamped-plate Green's
function \citep{marsland2002clamped}.  Furthermore, in some
cases, the system is ergodic; precise conditions are given in  \citep{soize94:_fokker}, which
studies generalised Langevin equations such as \cref{eq:11}
and provides conditions on $H$ to achieve a unique invariant
measure. 



While invariant measures are appealing,
we view the trajectories as convenient parameterisations of
the diffeomorphism and are not themselves of interest.
Furthermore, in some cases (see Section 9.2 of \citep{Younes}), the domain $B$ is taken
to be $\real^d$ and $G$ is translation
invariant (e.g.,
$G(\vec q_1,\vec q_2)=\exp(-(\Vpair{\vec q_1-\vec
  q_2}/\ell)^2)$ for a length scale $\ell$) and this means
$\exp(-\beta H)$ cannot be a probability measure on
$\real^{2dN}$.  It is simpler to ask for a distribution on
the diffeomorphism that is invariant under taking the
inverse; that is, $\vec \Phi$ and $\vec \Phi^{-1}$ have the
same distribution. To achieve this,
$[\vec p(t), \vec q(t)]$ should have the same distribution
under the time reversal $t\mapsto 1-t$. This can be achieved
 simply by setting initial data at $t=1/2$ and flowing
forward and backward using the same dynamics. Precisely,
choose an initial probability distribution $\mu^*$ on
$\real^{2dN}$. Given
$[\vec p(1/2),\vec q(1/2)]\sim \mu^*$, compute
$\vec p(t)$ and $\vec q(t)$ for $t>1/2$ by solving
\cref{eq:11}. For $t<1/2$, solve
  \begin{equation}
    \label{eq:11c}
     d\vec p_i%
    = \Big[\lambda \nabla_{\vec p_i} H- \nabla_{\vec
      q_i}H\Big]\,dt%
    +
    \sigma \,d \vec W_i(t),\qquad
    \frac{d\vec q_i}{dt}%
    =  \nabla_ {\vec p_i}H.
  \end{equation}
  Here the sign of the dissipation is changed as we evolve
  the system forward by decreasing $t$. The
  distribution of $[\vec p(t),\vec q(t)]$ is unchanged by
  $t\mapsto 1-t$, as can be verified using the
  Fokker--Planck equation.

  One choice for $\mu^*$ comes by choosing  distinguished landmark positions
  $\vec q_i^*$ and conditioning the Gibbs
  distribution on $\vec q_i^*$.  Define the covariance matrix $C$ by
  $C^{-1}= \beta \mathcal{G}(\vec q^*)\otimes I_d$ (the
  matrix $C$ is positive definite if $G$ is a positive-definite function and the points are distinct; see \cref{notation} for a definition of $\mathcal{G}$). With $\vec q^*\coloneq [\vec q^*_1,\dots,\vec q^*_N]$, we could choose
  $\mu^*
  =\Nrm(\vec 0, C)\times \delta_{\vec q^*}  \asymp\exp(-\beta H(\cdot, \vec q^*))\times \delta_{\vec q^*} $, which is the Gibbs distribution conditioned on positions $\vec q^*$.  We prefer to allow  deviation in the position also, and set $\mu^*= \Nrm(\vec 0,C)\times\Nrm(\vec q^*, \delta^2 I_{dN})$ for
  some variance $\delta^2>0$. Then $\mu^*$ is the product of Gaussian
  distributions, where positions are easily sampled independently from
  $\Nrm(\vec q_i^*,\delta^2 I_d)$ and momenta $\vec p$ are
  sampled from $\Nrm(\vec 0, C)$. The matrix $C$ is a
  $ dN\times dN$-covariance matrix. Despite the size,
  standard techniques such as the Cholesky or spectral factorisation can
  be used to sample $\vec p$.

  To summarise, we have defined two prior distributions, both based on the generalised Langevin system \cref{eq:11c}. Ideally, we take the Gibbs distribution for initial data and flow forward \cref{eq:11c} to define a distribution on $\vec\Phi\colon B\to B$. This approach is not always convenient, as the Gibbs distribution may not be a probability distribution and may also be difficult to sample and calculate with. An alternative is to chose a convenient distribution at $t=1/2$ and flow forward by \eqref{eq:11} and backward by \eqref{eq:11c} to define a distribution on paths and hence on $\vec\Phi$.




\subsection{Push-forward example}
The generalised Langevin equation defines a distribution on
the family of diffeomorphisms $\vec\Phi\colon B\to B$. We choose landmarks $\vec q_1,\dots,\vec q_N$,  the inverse temperature $\beta$,  the dissipation coefficent $\lambda$, and an
initial probability distribution $\mu^*$ on $\real^{2dN}$ at
some time $t^*\in [0,1]$. Then, the Langevin equation can be
solved to find paths $\vec q_i(t), \vec p_i(t)$ for
$t\in[0,1]$ and this defines $\vec\Phi\colon B\to B$ by
\cref{eq:a,eq:a1}.

To numerically simulate \cref{eq:11} with a time step
$\tstep=1/N_\tstep$ for $N_\tstep\in\naturals$, consider
times $t_n=n\tstep$ and the approximation
$\vec P_n\approx [\vec p_1(t_n), \dots,\vec
p_N(t_n)]$ and
$\vec Q_n\approx [\vec q_1(t_n), \dots,\vec
q_N(t_n)]$ given by the Euler--Maruyama method
\begin{equation}
\begin{pmatrix}
\vec P_{n+1}\\\vec Q_{n+1}
\end{pmatrix}
=
\begin{pmatrix}
\vec P_{n}\\\vec Q_{n}
\end{pmatrix}
+\begin{pmatrix}
 -\lambda \nabla _{\vec p} H\tstep-\nabla_{\vec q} H\tstep + \sigma \Delta\vec { W}_n\\
\nabla_{\vec p} H \tstep
\end{pmatrix},\label{eq:euler}
\end{equation}
where $H$ on the right-hand side is evaluated at $(\vec P_n,\vec Q_n)$ and $\Delta \vec W_n\sim \Nrm(\vec 0, I_{dN} \tstep)$ \iid. This method
converges in the root-mean-square sense with first order
(subject to smoothness and growth conditions on $H$) \cite{Kloeden1992-qd}.

We give numerical examples of the
push-forward map $\vec \Phi$  for the Green's function
$G(\vec q_1, \vec q_2)=\exp(-(\Vpair{\vec q_1-\vec
  q_2}/\ell)^2)$ with $\ell=0.5$ in two dimensions
($d=2$). Consider $B=[-1,1]^2$ and twenty regularly spaced
reference points $\vec q_i^r$ on the unit circle. For the
initial distribution, we take $\vec q_i(0)=\vec q_i^r$ and
generate reference momenta $\vec p_i(0)$ from the
conditional Gibbs distribution, so that
$\vec p(0)\sim \Nrm(\vec 0, C)$, for
$C^{-1}= \beta \mathcal{G}(\vec q^r)\otimes I_2$. Then,
approximate $\vec p_i(t_n), \vec q_i(t_n)$ by
\cref{eq:euler}. We can now apply the explicit Euler method
to \cref{eq:a,eq:a1} to define a mapping $\vec\Phi\colon B\to B$.  It can be
shown \citep{Mills2008-ec} that the approximate $\vec\Phi$
is also a diffeomorphism when $\tstep$ is sufficiently
small. We show samples of the action of $\vec\Phi$ on a  rectangular grid in \cref{fig:pf} for different values of
the inverse temperature $\beta$.



\begin{figure}
	\centering
	  \raisebox{3cm}{$\beta=10$}%
	 \includegraphics{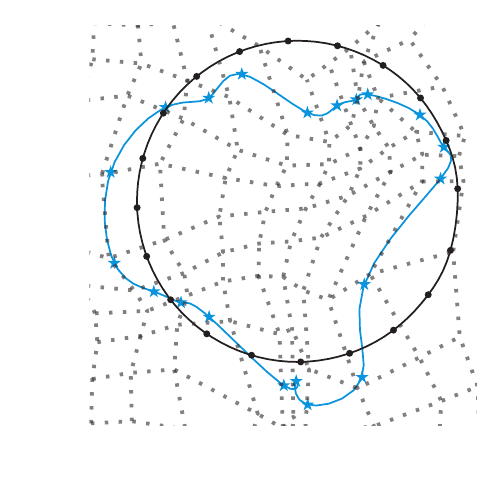}
	\raisebox{3cm}{ $\beta=20$}%
	\includegraphics{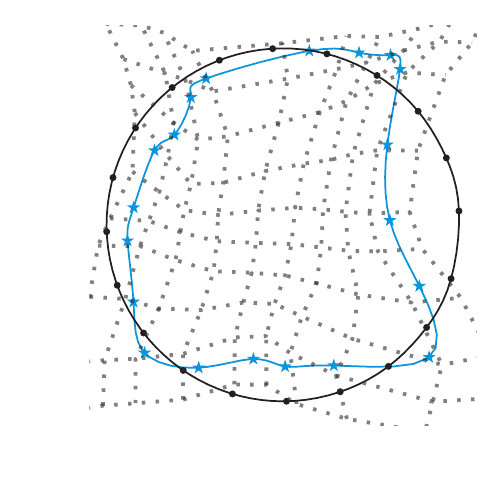}\\
\raisebox{3cm}{ $\beta=40$}%
 \includegraphics{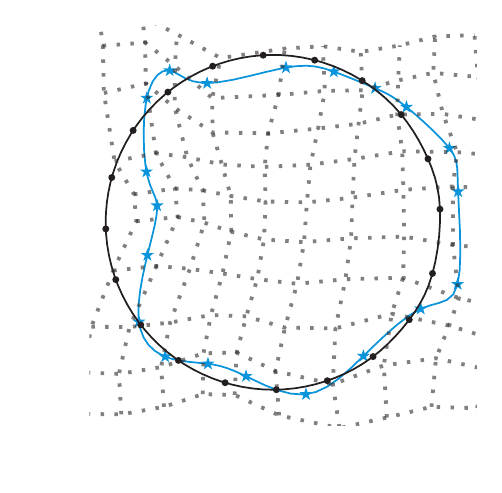}%
 \raisebox{3cm}{$\beta=80$}%
  \includegraphics{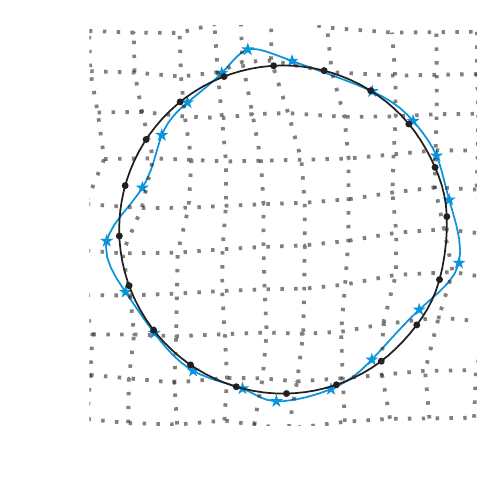}
  \caption{Push-forward maps $\vec\Phi$ applied to a grid on
    $B=[-1,1]^2$  and the unit circle (shown in blue) with $G(\vec q_1,\vec q_2)=\exp(-\Vpair{\vec q_1-\vec q_2}^2/\ell^2)$ for $\ell=0.5$, $\lambda=0.5$ and $\beta=10,20,40,80$, based on $\vec q_i^r$ as
    the marked points ({\color{blue} $\star$}) and $\vec p(0)\sim \Nrm(\vec 0, C)$ for $C^{-1}=\beta \mathcal{G}(\vec q^r)\otimes I_2$. As the
    inverse temperature $\beta$ is increased, the circle is
    pushed forward to smoother shapes. }\label{fig:pf}
\end{figure}
\section{Approximation of generalised Langevin equations}\label{sec:appr_lang}


Suppose that reference and target landmarks $\vec q^r_i$ and
$\vec q^t_i$ are known exactly. In
Bayesian statistics, the prior distribution is conditioned
on the data (landmarks in our case) to define a posterior
distribution (on the paths $\vec p(t), \vec q(t)$,
and hence on diffeomorphisms $\vec\Phi$).  For the generalised Langevin
prior with Gibbs initial data and exact landmark data, the posterior distribution on
$[\vec p(t), \vec q(t)]$ is generated by taking
solutions of \cref{eq:11} with initial data
$\vec q(0)=\vec q^r$ and
$\vec p(0)\sim \exp(-\beta H(\vec q^r,\cdot))$ and
conditioning on $\vec q(1)=\vec q^t$.
This is a type of diffusion bridge, which is important in parameter-estimation algorithms for SDEs; see \citep{Papaspiliopoulos2012-nl,Bladt2015-mm,MR2807970}.

In our case, the SDE gives a hypoelliptic diffusion and we
condition only on the position variables.  The problem is
similar to \citep{MR2807970}, which develops a stochastic
PDE for sampling Langevin diffusion bridges with the separable Hamiltonian $H=\frac 12 p^2+V(q)$ for
a potential $V$. It is not clear how their approach
generalises to the present situation with a non-separable  $H$. The method of analysis uses the Girsanov
theorem to replace \cref{eq:11} by a diffusion bridge for a linear SDE
\citep{Delyon2006}. The linear SDE has a Gaussian
distribution and standard formulas for conditioning Gaussian
distributions are available. This technique underlies
several approaches to sampling diffusion bridges such as
\citep{Golightly2008-zs,MR2807970}. In the present
situation, Girsanov is much less effective, as the
nonlinearities in the position equation due to
$\nabla_{\vec p_i} H=\sum_{j=1}^N\vec p_i G(\vec q_i, \vec
q_j)$ are unchanged by Girsanov's transformation and it is
hard to find a linear SDE to work with.

Other approaches to sampling diffusion bridges include
\citep{Bladt2015-mm}, which is not developed in the hypoelliptic
case, or the Doob h-transform
\citep{Papaspiliopoulos2012-nl}, which is computationally
very demanding, as it involves computing the full pdf of the
diffusion. Unfortunately, none of the known methods for diffusion bridges works with \cref{eq:11} to give computationally convenient algorithms.

Without an efficient method for sampling the
diffusion bridge, it is hard to formulate a Monte
Carlo Markov Chain method with good acceptance
rates. Consequently, the generalised Langevin prior
distribution is difficult to use in Bayesian statistics and
we now turn to simpler prior distributions, which arise by approximating the
Langevin equation. We introduce three priors, one based on a linearised Langevin equation and two based on the Baker--Campbell--Hausdorff formula for operator splittings.

All three of these methods are based on a regime of small dissipation $\lambda$ and large inverse temperature $\beta$. In this case, 
sample paths of the Langevin equation are close
to those of the Hamiltonian system on the time interval $[0,1]$. This is a reasonable
assumption in applications, as we want the time scale
$1/\lambda\gg 1$, so that the landmarks $\vec q_i(t)$ at
$t=0$ and $t=1$ are well-coupled, but there is some drift in them. As we saw in \cref{fig:pf}, 
small $\beta$ leads to large perturbations of the initial
shape. Therefore we assume that $\sigma^2=2\lambda/\beta$ is small for computational convenience.
In cases where these assumptions are not sufficient, it may be necessary to consider a higher-order method, but we do not do that here.

\subsection{Linearised Langevin equation}\label{sec:method}
In this section, based on small $\sigma^2$, we linearise the Langevin equation about the
Hamiltonian solution to define a Gaussian prior distribution.

Let $\hat{\vec z}(t)=[\hat{\vec p}(t),\hat{\vec q}(t)]$
denote a solution of \cref{eq:ham}.  Write the solution $\vec z(t)=[\vec p(t), \vec q(t)]$ of \cref{eq:11} as $\vec z(t)=\hat{\vec z}(t)+\vec{\delta}(t)+\vec {R}(t)$, where $\vec \delta(t)$ is a first-order correction given by linearising \cref{eq:11} around $\hat{\vec z}(t)$. With initial conditions $\vec{\delta}(t^*)=\vec z(t^*)-\hat{\vec z}(t^*)$, it is defined by the linear system of
SDEs
  \begin{equation}
    d\vec \delta%
    =    \bp{      -\lambda%
      \begin{pmatrix}
        \nabla_{\vec p} H(\hat{\vec z}(t))\\%
        \vec 0
      \end{pmatrix}%
      +B^+(t)\vec \delta    } \,dt%
    +    \begin{pmatrix}
      \sigma I_{dN} \\ 0
    \end{pmatrix}d \vec W(t),%
    \label{eq:linear}
  \end{equation}
  where $\vec W(t)$ is a $\real^{dN}$ Brownian motion and
  \[
  B^+(t)=
  \begin{pmatrix}
    -\lambda \nabla_{\vec p \vec p} H - \nabla_{\vec q \vec
      p} H%
    & -\lambda \nabla_{\vec p \vec q} H - \nabla_{\vec q \vec q} H\\
    \nabla_{\vec p\vec p} H%
    & \nabla_{\vec p\vec q} H
  \end{pmatrix},
  \]
  all evaluated at $\hat{\vec z}(t)$. In the case
  $\lambda=\sigma=0$, $\vec\delta=\vec 0$
  solves~\cref{eq:linear}.  With smoothness and growth conditions on $H$, it can be shown that the remainder $\vec R(t)=\order{\sigma^2+\lambda^2}$ \cite{Freidlin2012-dw}.

  To preserve the symmetry of the
  system, we specify an initial distribution at $t^*=1/2$ and
  ask that $\vec \delta(t^*)\sim \mu^*$.  For
  $t<1/2$, we use
  \begin{equation}
    d\vec \delta%
    =    \bp{      \lambda%
      \begin{pmatrix}
        \nabla_{\vec p} H(\vec p^*(t), \vec q^*(t))\\%
        \vec 0
      \end{pmatrix}%
      +B^-(t)\vec \delta    } \,dt%
    +    \begin{pmatrix}
      \sigma I_{dN} \\ 0
    \end{pmatrix}d \vec W(t),%
  \end{equation}
  for
  \[
    B^-(t)%
    =
    \begin{pmatrix}
      \lambda \nabla_{\vec p \vec p} H - \nabla_{\vec q \vec
        p} H%
      & \lambda \nabla_{\vec p \vec q} H - \nabla_{\vec q \vec q} H\\
      \nabla_{\vec p\vec p} H%
      & \nabla_{\vec p\vec q} H
    \end{pmatrix}.
  \]
  That is, the sign of the dissipation is switched as we are
  specifying a final condition for this system. $B^-$ differs
  by a sign in the conservative terms, as time is reversed.

  Equation \cref{eq:linear} is linear, its solution is a Gaussian process, and exact expressions are
  available for the mean and covariance in
  terms of deterministic integrals \citep{karatzas1998brownian}. We prefer to use a
  time-stepping method to approximate \cref{eq:linear}. We
  specify the distribution at some intermediate time, and
  need forward and backward integrators: The Euler--Maruyama
  method gives approximations $\vec \delta_n\approx \vec \delta(t_n)$ defined by
  \[
    \vec \delta_{n+1} = \underbrace{\pp{ I+ B^+_n \tstep
      }}_{\eqcolon M^+_n}\vec \delta_n%
    +\vec A_n  %
    +\begin{pmatrix}
      \sigma \Delta \vec W_n\\
      \vec 0
    \end{pmatrix},\quad \text{use for $t_{n+1}>1/2$,}
  \]
  \[
    \vec \delta_{n-1}%
    = \underbrace{\pp{ I+ B^-_n \tstep } }_{\eqcolon M^-_n}\vec
    \delta_n%
    + \vec A_n  %
    + \begin{pmatrix}
      \sigma \Delta \vec W_n\\
      \vec 0
    \end{pmatrix},\quad \text{use for $t_{n-1}<1/2$,}
\]
where
\[
\vec A_n=-\tstep\,\lambda
\begin{pmatrix}
\nabla_{\vec p} H\\ \vec 0
\end{pmatrix}
\]
\[
B^+_n=B(t_n),\qquad%
B^-_n%
=-B^-(t_n)%
=\begin{pmatrix} -\lambda \nabla_{\vec p\vec p} H +
  \nabla_{\vec q \vec p} H %
  & -\lambda \nabla_{\vec p \vec q} H + \nabla_{\vec q \vec q} H\\
  -\nabla_{\vec p\vec p} H%
  & -\nabla_{\vec p\vec q} H.
\end{pmatrix}
\]
For a Gaussian
initial distribution $\mu^*$, the resulting distribution on
paths and their Euler--Maruyama approximation are Gaussian. In
\cref{sec:lin_dist}, we give equations for calculating the
mean and covariance of the Euler--Maruyama approximations
$[\vec \delta_0,\dots,\vec \delta_{N_\tstep}]$.




The Gaussian distributions can be sampled to generate paths
$[\vec p(t),\vec q(t)] \approx [\hat{\vec p}(t), \hat{\vec q}(t)]+\vec \delta(t)$. This then
defines a map $\vec \Phi$ via \cref{eq:a,eq:a1}. Note however
that the consistency is broken and $\vec{\Phi}(\vec q_i(0))$ may
not equal $\vec q_i(1)$.
\subsection{Operator splitting}

Let $L$ denote the generator associated to the generalised Langevin equation \eqref{eq:11}. Then,
$L=L_0+ \sigma^2 L_1$ for
\begin{align*}
  L_0%
  &= \nabla_{\vec p} H \nabla_{\vec q}%
  -\nabla_{\vec q}H \nabla_{\vec p},\qquad%
  \text{known as the Liouville operator, and}\\
  L_1%
  &= \frac{1}{\sigma^2}\pp{-\lambda \nabla_{\vec p} H
    \nabla_{\vec p}%
    +\frac 12 \sigma^2 \nabla_{\vec p}\cdot \nabla_{\vec
      p}}%
  ={-\frac{\beta}{2} \nabla_{\vec p} H \nabla_{\vec p}%
    +\frac 12 \nabla_{\vec p}\cdot \nabla_{\vec p}}.
\end{align*}
The Fokker--Planck equation is $\rho_t=L^* \rho$, where
$L^*$ denotes the adjoint of $L$, and
describes the evolution of the pdf from a
given initial density $\rho(0,\cdot)=\rho_0$. Using semigroup
theory, we write $\rho(t,\cdot)=e^{L^* t}\rho_0$.
  We
can approximate $e^{A+B}$ via $e^{A} e^{B}+\order{[A,B]}$ or
via the Strang splitting  as
  \[
  e^{A+B}\approx e^{A/2} e^B e^{A/2} + \order{[B,[B,A]]+[A,[A,B]]},
  \]
  where $[\cdot,\cdot]$ denotes the operator commutator.
  This can be applied with $A=L^*_0$ and $B=\sigma^2L^*_1$ to simplify \cref{eq:11}.
   In the small-noise limit,
  $\sigma^2 L_1^*\to 0$, but $L_0$ is
  order one and the error is $\order{\sigma^2}$. These approximation
  strategies do preserve the Gibbs invariant measure, as
  $e^{\sigma^2 L_1^*}\mu=e^{L_0}\mu=0$
  for $\mu=\exp(-\beta H)$. They are also much easier to
  compute with than the full
  $e^{L^*}$. We look at two uses of the Strang splitting:
  \begin{description}

\item[First splitting]  Approximate
\[
e^{L^*}\approx
e^{\sigma^2L_1^*/2}%
e^{L_0^*}%
e^{\sigma^2L_1^*/2}.
\]
The semigroup on the right-hand side maps
\[
[\vec p(0),\vec q(0)]%
\underbrace{\longmapsto}_{e^{\sigma^2L_1^*/2}} %
[\vec p(1/2),\vec  q(0)]%
\underbrace{\longmapsto}_{e^{L_0^*}} %
[\tilde {\vec p}({1/2}),{ \vec{q}}(1)]%
\underbrace{\longmapsto}_{e^{\sigma^2L_1^*/2}} %
[\vec {p}(1),\vec q(1)].
\]

The two steps with $e^{\sigma^2 L_1^*/2}$ are described by the time-half evolution governed by the Ornstein--Uhlenbeck SDE
\begin{equation}
d\vec p%
=-\lambda \nabla_{\vec p} H(\vec p,\vec q_0)\,dt%
+\sigma\, d\vec W(t),\qquad %
\vec p(0)=\vec p_0,%
\label{genou}
\end{equation}
for  $[\vec p_0,\vec q_0]=[\vec p(0), \vec q(0)]$ or $[\tilde{\vec p}(1/2), \vec q(1)]$. This only involves a change in momenta. The middle step with $e^{ L_0^*}$ is the time-one evolution with the Hamiltonian equations \cref{eq:ham}.
If $[\vec {p}(0), \vec q(0)]\sim \exp(-\beta H)$, then
so are $[\vec {p}(1/2), \vec q(0)]$,
$[\tilde{\vec p}(1/2), \vec q(1)]$, and also
$[\vec p(1),\vec q(1)]$. The effects of $e^{\sigma^2 L_0^*/2}$ at
either end are superfluous, as they change the
momentum only; any conditioning is applied on the position
data. In this way, we see fit to disregard this term and
define the prior as the push forward under the Hamiltonian
flow of Gibbs' distribution. The density of the prior on
paths $\vec z(t)=[\vec p(t),\vec q(t)]$ for $t\in [0,1]$ is
  \[
  \exp(-\beta H(\vec z(0)))%
  \delta_{\vec z(t)-\vec{S}{(t; 0,\vec z(0))}},
  \]
  where $\vec S(t; s,\vec z_0)$ is the solution of
  \cref{eq:ham} at time $t$ with initial data
  $[\vec p(s),\vec q(s)]=\vec z_0$.
  \item[Second splitting] Approximate
    \[
    e^{L^*}\approx
    e^{L_0^*/2}e^{\sigma^2L_1^*}e^{L_0^*/2}.
    \]

The semigroup on the right-hand side maps
\[
  [\vec p(0),\vec q(0)]%
  \underbrace{\longmapsto}_{e^{L_0^*/2}} %
  [(\vec p(1/2),
  \vec q(1/2)]%
  \underbrace{\longmapsto}_{e^{\sigma^2L_1^*}} %
  [(\tilde{
    \vec{p}}(1/2), \vec q(1/2)]%
  \underbrace{\longmapsto}_{e^{L_0^*/2}} %
  [\vec p(1), \vec
  q(1)].%
\]
Again, if $[\vec p(0),\vec q(0)]\sim \exp(-\beta H)$, then so do each of the following sets of positions and momenta. It is important to preserve each of the three parts of the approximation, as the Hamiltonian flow at either end affects all components.
The density is
\[
\exp(-\beta H(\vec p(1/2), \vec q(1/2))\,%
\upsilon(1, \tilde{\vec{p}}(1/2); [\vec p(1/2),\vec q(1/2)]) \,%
\delta_{\vec z(t)-\vec Z(t)}
\]
where
$\upsilon(t, \vec p; [\vec p_0,\vec q_0])$ is the density at time $t$ of the random variable $\vec p(t)$ defined by the SDE
\begin{equation}
d\vec p%
=-\lambda \nabla_{\vec p} H(\vec p,\vec q_0)\,dt%
+\sigma d\vec W(t),\qquad \vec p(0)=\vec p_0.\label{eq:ou}
\end{equation}
The function $\vec Z(t)$ describes the Hamiltonian flow and is defined by
\begin{equation}\label{Z}
\vec Z(t)=
\begin{cases}
S(t; 1/2, [\tilde{\vec {p}}(1/2),\vec q(1/2)]),& t>1/2;\\
S(t; 1/2, [{\vec {p}}(1/2),\vec q(1/2)]),& t<1/2.
\end{cases}
\end{equation}
It will be more convenient to have both halves flow forward and write
\[
\vec Z(t)=
\begin{cases}
S(t-1/2; 0, [\tilde{\vec {p}}(1/2),\vec q(1/2)]),& t>1/2;\\
R S(1/2-t; 0, R [{\vec {p}}(1/2),\vec q(1/2)]),& t<1/2,
\end{cases}
\]
where $R[\vec p, \vec q]=[-\vec p, \vec q]$ expresses the time reversal.

The key variables for conditioning are the start and
end positions, $\vec{q}(0)$ and $\vec{q}(1)$. These positions are deterministic maps of the
time-half data, provided by a time-half push forward
of the deterministic Hamiltonian dynamics. Thus, it is convenient to
express the prior in terms of $\vec p(1/2), \vec q(1/2),
\tilde{\vec p}(1/2)$ by the density proportional to
 \[
 \exp(-\beta H(\vec p(1/2), \vec q(1/2))\,%
 \upsilon(1/2, \tilde{\vec{p}}(1/2); [\vec p(1/2), \vec q(1/2)]).
\]

We now show how to simplify $\upsilon$ when $\beta$ is large and $\lambda$ is small.
In \cref{eq:ou}, $\nabla_{\vec p} H(\vec p,\vec q) =(\mathcal{G} (\vec q) \otimes I_d ) \vec p$ for  $\mathcal{G}(\vec q)$ defined in \cref{notation}.  For a deterministic $\vec p_0$, the solution $\vec p(t)$ of \cref{eq:ou} is an Ornstein--Uhlenbeck process with  a Gaussian distribution with
mean
$\mu_t =(e^{-\lambda \mathcal{G}(\vec q_0)t }\otimes I_d)\,\vec
p_0$ and covariance $C_t\otimes I_d$, for
\[
C_t%
\coloneq \sigma^2\frac1{2\lambda}\,\mathcal{G}(\vec q_0)^{-1}%
\pp{I_N-e^{-2 \lambda \,t\, \mathcal{G}(\vec q_0)}}%
=\frac{1}{\beta}\mathcal{G}(\vec q_0)^{-1}%
\pp{I_{N}-e^{-2 \lambda \,t\,\mathcal{G}(\vec q_0)}}.
\]
By Taylor's theorem, $e^{-\lambda A}=I_N-\lambda A +\int_0^1 \lambda^2 A^2 e^{-\lambda A s} (1-s)\,ds$ for any $N\times N$ matrix $A$. Hence,
\begin{align*}
C_t%
&=\sigma^2 \,t\,I_N+\frac{1}{\beta }\,%
 \mathcal{G}(\vec q_0)^{-1}%
 \int_0^1 4 \,\lambda^2 \,t^2\, \mathcal{G}(\vec q_0)^2 e^{-2\, \lambda\, t\, \mathcal{G}(\vec q_0)} (1-s)\,ds\\
 &=\sigma^2 \,t\,I_N + 4 \frac{1}{\beta} \,\lambda \,t\, K,\qquad\text{ for }%
  K%
  \coloneq \int_0^1 \lambda\,t\,\mathcal{G}(\vec q_0) \,%
  e^{-2 \,\lambda \,t\,\mathcal{G}(\vec q_0)}%
   (1-s)\,ds.
\end{align*}
When $G$ is a positive-definite function, $K$ is uniformly bounded over any $\vec q_0\in \real^{dN}$ and $t\in[0,1]$. Therefore,
\begin{equation}
C_t=\sigma^2\,t\, I_N+\order{\lambda\, t/\beta}.\label{eq:Ct}
\end{equation}
As explained in \cref{sec:appr_lang}, we are interested in large $\beta$ and small $\lambda$ and hence we are justified in approximating $C_t\approx \sigma^2\,t\, I_N$ for $t\in [0,1]$.
Then,
\[
e^{\sigma^2 L_1^* t} \delta_{(\vec p_0,\vec q_0)}
\approx \Nrm((e^{-\lambda \, t\,
	\mathcal{G}(\vec q_0)} \otimes I_d)\vec p_0 , \sigma^2\,t\, I_{dN}) \times \delta_{\vec q_0}.
\]
For the prior, we are interested in $\upsilon(1, \tilde{\vec p}(1/2);(\vec p(1/2), \vec q(1/2)))$ and, by this approximation,
\[
\upsilon(1, \cdot; (\vec p(1/2), \vec q(1/2)))
\approx \Nrm( (e^{-\lambda \,    \mathcal{G}(\vec q(1/2) ) } \otimes I_d)\vec p(1/2), \sigma^2 I_{dN}) \times \delta_{\vec q(1/2)}.
\]
Hence, the prior distribution on $(\vec
p(1/2), \vec q(1/2), \tilde{\vec p}(1/2))$ has density proportional to
\begin{equation}
 \exp\pp{-\beta H(\vec p(1/2),\vec q(1/2))}%
\exp\pp{-\frac 1{2\sigma^2}\norm{\tilde{\vec p}(1/2)%
		-(e^{-\lambda \,    \mathcal{G}(\vec q(1/2) ) } \otimes I_d)%
    \vec p(1/2)}^2}.\label{prior2}%
\end{equation}
Distributions on the paths $[\vec p(t),\vec q(t)]$ are
implied by solving \eqref{eq:ham} with initial data
$[\vec p(1/2)$, $\vec q(1/2)]$ for $t>1/2$ and with final
data  $[\tilde{\vec p}(1/2), \vec{q}(1/2)]$ for $t<1/2$.
\end{description}

\section{Data and experiments}\label{sec:bayes}

We now show how to work with the prior distributions using
data.  For a prior distribution on the diffeomorphisms
$\vec\Phi$, we would like to compute or sample from the
conditional distribution of $\vec\Phi$ given that
$\vec q_i(0)=\vec q_i^t+ \vec \eta_i^t$ and
$\vec q_i(1)=\vec q_i^r+\vec \eta_i^r$, where
$\vec \eta_i^t, \vec \eta_i^r\sim \Nrm(\vec 0, \delta^2 I_d)$
\iid for some parameter $\delta>0$.  We present three cases:
\begin{enumerate}
\item The linearised-Langevin prior is Gaussian and
  conditioning by observations of the landmarks with \iid
  Gaussian errors yields a Gaussian posterior
  distribution. We show how to compute the posterior
  distribution for the Euler--Maruyama discretised equations.
\item The first splitting prior consists of a Gibbs
  distribution on the initial data and Hamiltonian flow
  equations. As such the distribution is specified by the
  distribution on the initial landmarks and generalised
  momenta. We condition this on landmarks also with \iid
  Gaussian errors. The posterior is not Gaussian. We show
  how to compute the MAP point and approximate the posterior
  covariance matrix by the Laplace method. The MAP point is a set
  of initial landmark positions and generalised momenta.
\item The second splitting prior consists of a Gibbs
  distribution on the midpoint, a second momenta (correlated
  to the first) at the midpoint, and Hamiltonian flow
  equations. This distribution is parameterised by one set
  of landmarks and two sets of generalised momenta. We show
  how to examine the posterior distribution (again
  conditioning on Gaussian observations) via the MAP point and Laplace method. We interpret the
  MAP point as an average set of landmarks, by extending the prior to allow for multiple sets of landmarks.
\end{enumerate}

The discussion includes computational examples. The
calculations were performed in Python using the Numpy,
Matplotlib, and Scipy libraries and the code is available for download \cite{reg_sde}. For information about the code and for a set of further examples, see the Supplementary Material. In all cases, the landmarks in each image were centred to have zero
mean and then aligned using an orthogonal Procrustes transformation in order
to remove potentially confusing global transformations.

\subsection{Noisy landmarks via the linearised-Langevin equation}\label{ex_lin_lan}

The key step in defining the linearised-Langevin prior is distinguishing paths about which to linearise. We choose paths $[\vec p(t), \vec q(t)]$ by solving the Hamiltonian boundary-value problem \cref{eq:ham} based on the landmark data $\vec q_i^t$ and $\vec q_i^r$. Then, the linearised-Langevin prior is a Gaussian distribution on
the paths $[\vec p(t),\vec q(t)]$ generated by
\eqref{eq:linear}, the linearisation of the Langevin equations about the distinguished paths. We denote the Euler--Maruyama
approximation with time step $\tstep=1/N_{\tstep}$ to
$[\vec p^*,\vec q^*]+\vec \delta$ at $t_n$ by
$[\vec P_n,\vec Q_n]$ and the vector
$[\vec P_0,\vec Q_0,\dots, \vec P_{N_\tstep},\vec
Q_{N_\tstep}]$ by $\vec X$. The mean $\vec M_1$ and
covariance $\mathcal{C}$ of $\vec X$ can be found using the
equations in \cref{sec:lin_dist}.


Let $\widehat{\vec Q}^r=\vec Q_0+\vec \eta^r$ and
$\widehat{\vec Q}^t=\vec Q_{N_\tstep}+\vec \eta^t$ for
$\vec \eta^r,\vec \eta^t\sim \Nrm(\vec 0,\delta^2 I_{dN})$ \iid (the
distributions are independent of each other and also of the
Brownian motions). Let
$\vec Y=[\widehat{\vec Q}^r,
\widehat{\vec Q}^t]$ and $\vec Z=[\vec X, \vec Y]$.
$\vec Z$ is then Gaussian with mean
$
\bp{\vec M_1,\vec M_2}=
  \bp{\vec M_1,
    \bp{\mean{\vec Q_0},\mean{\vec Q_{N_\tstep}} }}%
$
and covariance
    \[
    \begin{bmatrix}
      C_{11} & C_{21}^\trans\\
      C_{21} & C_{22}
    \end{bmatrix},\quad
\text{ where  $C_{11}=\mathcal{C}$, }
C_{22}=
\begin{bmatrix}
\cov(\vec Q_0, \vec Q_0)+\delta^2 I_{d N}&C_{0N_{\tstep}}\\
C_{0N_{\tstep}}^\trans&  \cov(\vec Q_{N_{\tstep}}, \vec
Q_{N_{\tstep}})+\delta^2 I_{d N}
\end{bmatrix},
\]
\[\text{ and } C_{21}=
  \begin{bmatrix}
  \cov(\vec Q_0, \vec Q_0)& \dots&\cov(\vec Q_0, \vec
Q_{N_\tstep})\\
  \cov(\vec Q_{N_\tstep}, \vec Q_0)& \dots&\cov(\vec Q_{N_\tstep}, \vec Q_{N_\tstep})
\end{bmatrix}.
\]
The distribution of $\vec X$ given observations
$\widehat{\vec Q}^t=\vec q^t$ and $\widehat{\vec Q}^r=\vec q^r$
is $\Nrm(\vec M_{1|2}, C_{1|2})$ with
\begin{align*}
  \vec M_{1|2}&= \vec M_1 %
  + C_{12} C_{22}^{-1}(\vec y-\vec  M_2),\qquad \vec y=[\vec
  q^r,\vec q^t],\\
  C_{1|2}&= C_{11}- C_{12} C_{22}^{-1} C_{21}.
\end{align*}
For the number of landmarks that we consider %
(less than a hundred), this is readily computed using
standard linear-algebra routines. The two inverse matrices
involved are of size $dN\times dN$. The full covariance
matrix is memory demanding though, as it has size $(N_\tstep+1)
2dN \times (N_\tstep+1) 2dN$.

\cref{fig:lin1} shows the solution of \cref{eq:ham} and the associated registration for a set of known landmarks. We linearise about the solution $[\vec p(t), \vec q(t)]$, to define a linearised-Langevin prior and \cref{fig:lin} shows the standard deviations of the computed posterior distribution at the landmark positions. \cref{fig:lin_var} shows the standard deviation of the posterior throughout the image space, in both the original and warped co-ordinate systems. The difference in standard deviations shown in \cref{fig:lin_var,fig:lin} is significant, as one comes from the posterior distribution matrix at the landmarks and the other by a Monte Carlo estimator of the distribution of $\vec \Phi(\vec Q)$ for $\vec Q$ away from landmark points. In this linearised situation, $\vec \Phi$ may not agree with the linearised Langevin equation. We see  this weakness again for large deformations in \cref{fig:lin_consist}, where we compare the random diffeomorphisms and the paths $\vec q_i(t)$ defined by samples of the posterior distribution. Though $\vec \Phi(\vec q_i^r)$ and $\vec q_i(1)$ agree when the data is regular, for larger deformations, there is significant disagreement. This is because $\vec \Phi$ is defined by \cref{eq:a,eq:a1}, which is no longer identical to the linear equation \cref{eq:linear} used to define $\vec q_i(t)$.

\begin{figure}
  \centering
  \includegraphics{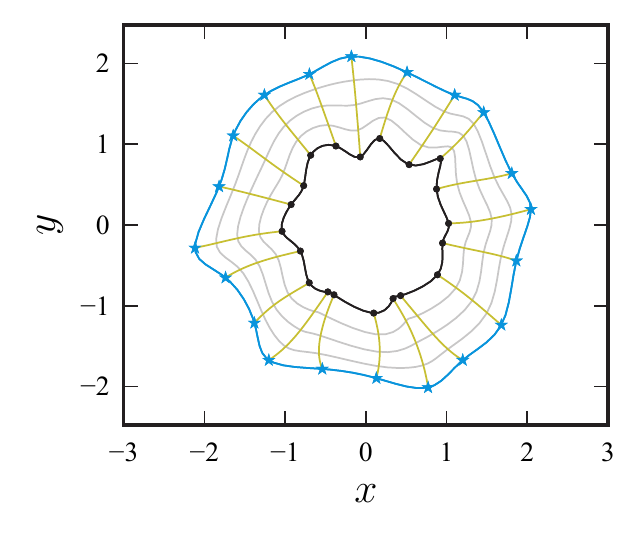}
  \caption{The blue and black stars mark twenty noisy observations of regularly spaced points on two concentric circles. Using the Hamiltonian boundary-value problem \cref{eq:ham}, we compute a diffeomorphism and show paths $\vec q_i(t)$. Three intermediate shapes are shown in grey. The yellow lines show the paths taken by the landmarks through the interpolating shapes.}\label{fig:lin1}
\end{figure}
\begin{figure}
  \centering
    \includegraphics{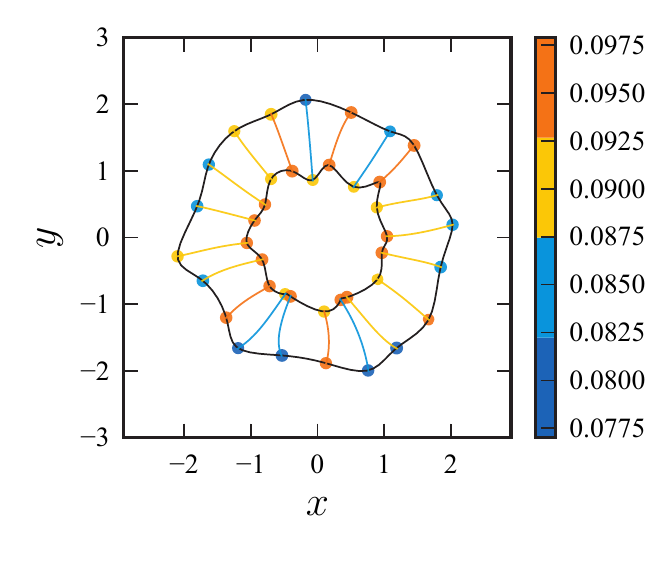}
    \includegraphics{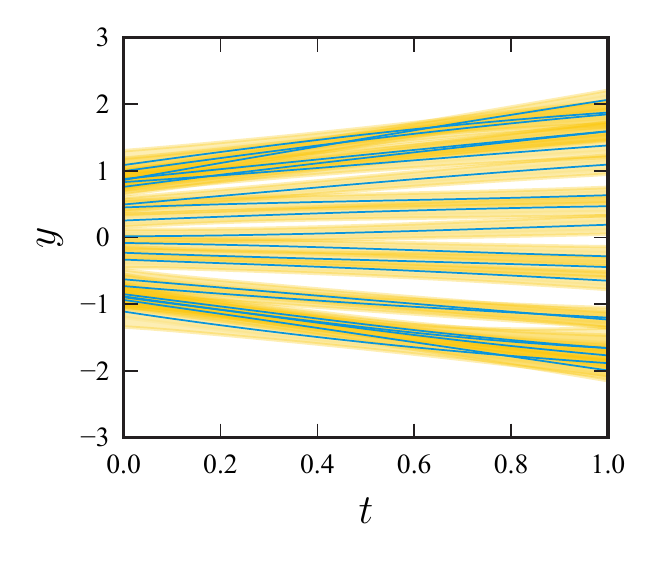}
  \caption{A registration between two noisy observations of a circle at different scales (radii of 1 and 2 correspondings to times $t=0, 1$ respectively) using the linearised-Langevin
    prior (with $\lambda=0.1$ and $\beta=25$), with landmarks observed with \iid
    $\Nrm(\vec 0, \delta^2 I_d)$ errors for $\delta^2=0.01$. The discs on the left-hand plot
    and the yellow shadows on the right-hand plot indicate one
    standard deviation of the computed posterior covariance
    matrix.}\label{fig:lin}
\end{figure}
\begin{figure}
	\centering
	\includegraphics{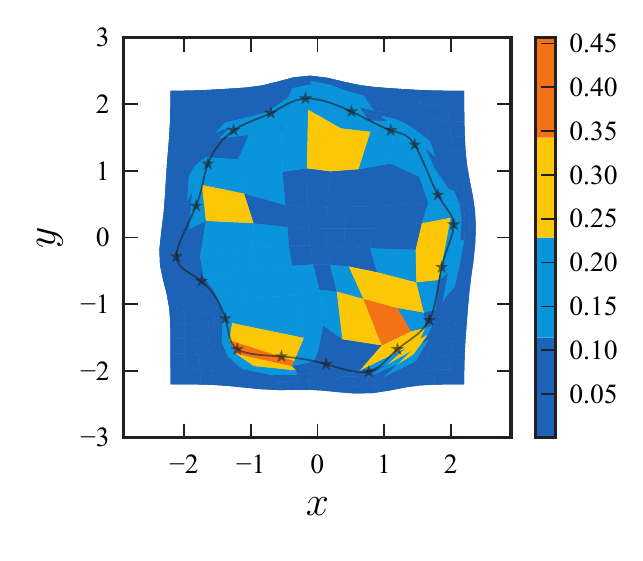}
	\includegraphics{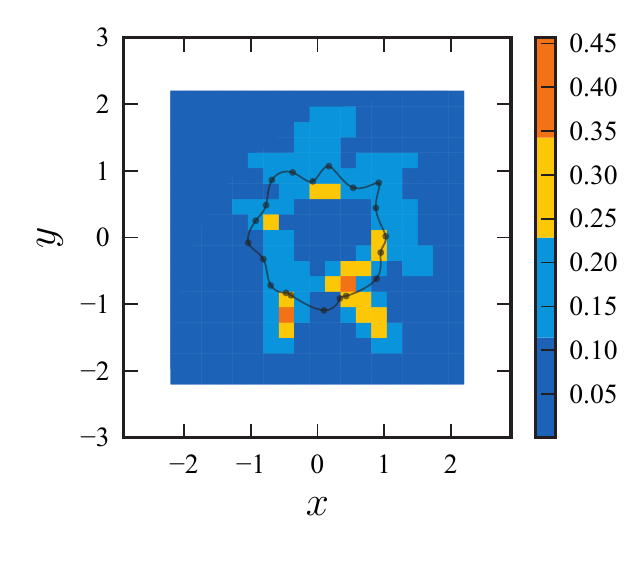}
	\caption{The colours shows the standard deviation of $\vec\Phi(\vec Q)$ at $\vec\Phi(\vec Q)$ (left-hand side) and at $\vec Q$ (right-hand side), for a set of uniformly spaced $\vec Q$ on a rectangular grid, when $\vec\Phi$ is defined by the posterior distribution for the linearised-Langevin prior. }\label{fig:lin_var}
\end{figure}
\begin{figure}
	\centering
	\includegraphics{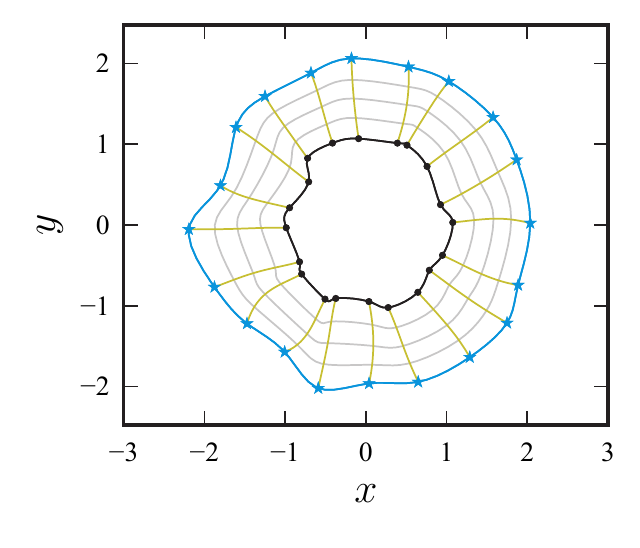}%
	\includegraphics{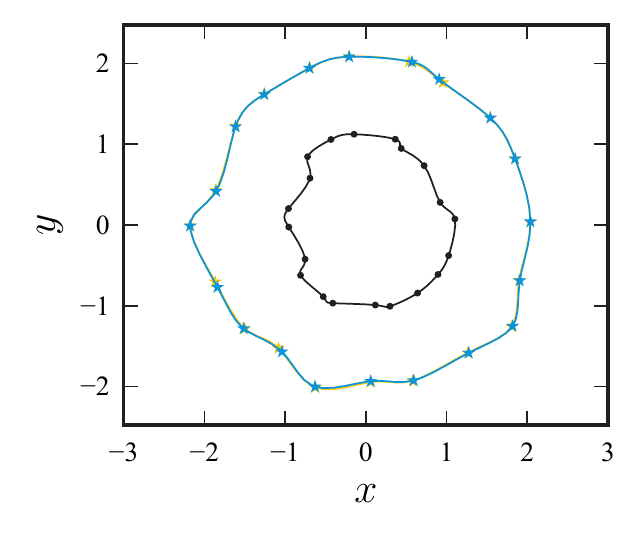}\\%
	\includegraphics{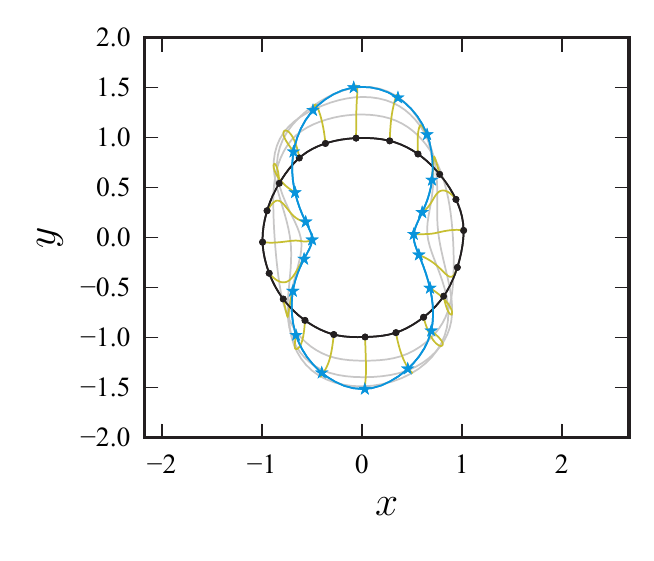}%
	\includegraphics{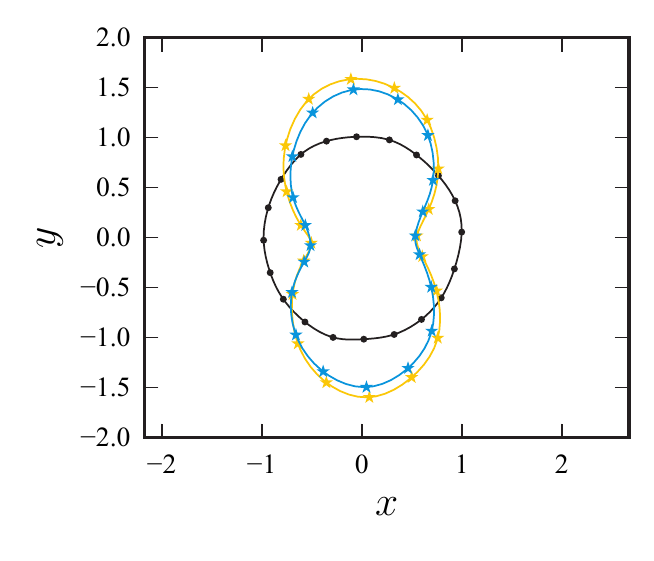}%
	\caption{In the right-hand column,  blue stars mark $\vec\Phi(\vec q_i(0))$ and the yellow stars marks $\vec q_i(1)$, where $\vec q_i(t)$ and $\vec\Phi$ are given via samples of the linearised-Langevin posterior distribution, with paths shown in the left-hand column. The inner black loop marks $\vec q_i(0)$. Due to the linearisation, $\vec \Phi(\vec q_i(0))\ne \vec q_i(1)$, although it is  closer on the top row where the deformation field is much smoother.}%
	\label{fig:lin_consist}
\end{figure}

\subsection{Noisy landmarks by operator splitting}\label{ss:nlos}

The first splitting prior is much less memory demanding than
the linearised-Langevin prior, as the randomness concerns
only the initial position and momenta. It also has the
advantage of preserving the Gibbs distribution and maintaining
consistency with the definition of $\vec\Phi$.
We show how to use this prior in the same scenario as
\cref{ex_lin_lan}. This time we are unable to sample the posterior distribution. Instead, we formulate a MAP
estimator and apply a Laplace approximation to estimate the
posterior covariance.

As the observation error is independent of the prior, the
posterior density is given by the pdf of the prior on
$[\vec p_0, \vec q_0]$ times the data likelihood for the
observations $\vec q_0$ and $S_q(1,0;[\vec p_0,\vec q_0])$ of $\vec q^r$ and $\vec q^t$. The density of the first
splitting prior is $ \exp(-\beta H(\vec p_0,\vec q_0))$. For
Gaussian observations, the data likelihood is proportional
to
\[
 \exp\pp{ -\frac{1}{2\delta^2}\pp{\norm{\vec q^r-\vec q_0}^2%
    +\norm{\vec q^t-S_q(1; 0, [\vec p_0,\vec q_0
    	])}^2}},
\]
where $S_q$ denotes the position components in $S$ (the Hamiltonian flow map).
The posterior density is proportional to
\[\exp(-\beta H(\vec p_0,\vec q_0))%
\exp\pp{-\frac{1}{2\delta^2}\pp{\norm{\vec q^r-\vec q_0}^2 +
    \norm{\vec q^t-S_q(1; 0, [\vec p_0,\vec q_0])}^2}}.
\]
To find the MAP point, we minimise
\begin{equation}\label{MAP1F}
F(\vec p_0,\vec q_0)\coloneq  \beta H(\vec p_0,\vec q_0)%
  +\frac{1}{2\delta^2}%
  \pp{\norm{\vec q^r-\vec q_0}^2%
    +\norm{\vec q^t-S_q(1;0, [\vec p_0,\vec q_0])}^2}.
\end{equation}
This comprises the regulariser that comes from Gibbs' distribution
and two landmark-matching terms, and can also be derived as a Tychonov regularisation of the standard landmark registration problem. There is one parameter $\beta$ from the Gibbs distribution and the dissipation $\lambda$ is not present. We minimise $F$ to find an approximation to
the MAP point, using standard techniques from unconstrained
optimisation and finite-difference methods for \cref{eq:ham}.

The Laplace method gives an approximation to
the posterior covariance matrix by a second-order approximation to $F$ at
the MAP point $\vec z_0$. Thus we evaluate the Hessian $\nabla^2 F$ of $F$
at the MAP point. Second derivatives of $F$ are approximated
by using a Gauss--Newton approximation for the last term, so we use
\[
\nabla^2 F%
\approx\beta \nabla^2 H %
+ \frac{1}{\delta^2}%
\bp{\begin{pmatrix}
 0 & 0 \\ 0 & I_{dN}
\end{pmatrix} + J^\trans J},
\]
where $J$ is the Jacobian matrix of $S_q(1;0,\vec z_0)$. The Gauss--Newton approximation
guarantees that the second term is positive definite (though
the Hessian of $H$ and the overall expression may not be).
To make sure the covariance is a well-defined  symmetric positive-definite matrix, we form a spectral decomposition of $\nabla^2 F$, throw away any
negative eigenvalues, and form the inverse matrix from the
remaining eigenvalues to define a covariance matrix
$C\approx \nabla^2 F^{-1}$.
See \cref{fig:split1} for an example.
\begin{figure}
  \centering
  \includegraphics{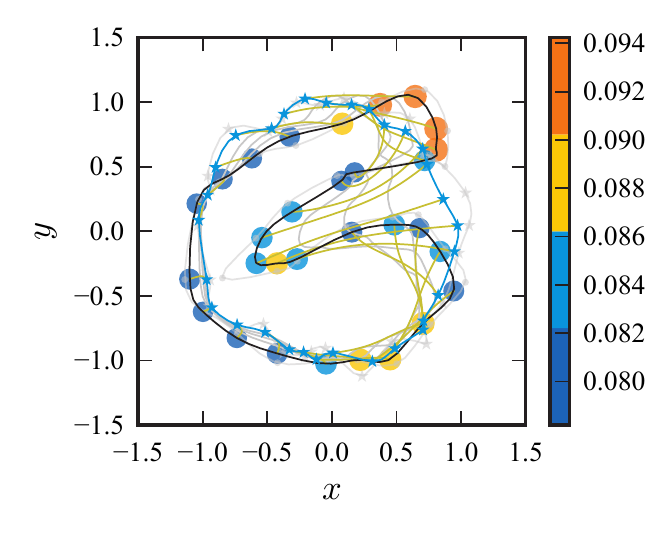}
  \includegraphics{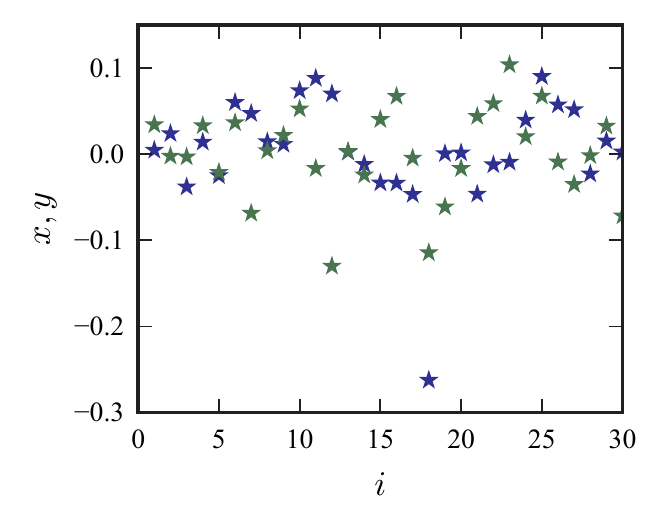}
  \caption{Noisy landmark registration (with $\delta=\sqrt{0.005}\approx 0.07$)
    using the first splitting prior (with
    $\beta=25$). In the left-hand plot, the grey lines mark the original landmark data. The landmarks given by the MAP registration algorithm are marked in colour, with discs indicating the
    standard deviation on the target landmarks by the Laplace approximation for the computed posterior
    covariance. The right-hand plots shows the difference between
    MAP landmarks and data landmarks.}\label{fig:split1}
\end{figure}

\subsection{Second splitting prior and landmark-set averages}
Averages are an important way of summarising a
data set. Under our Bayesian formulation, it is relatively simple to
define a consistent average for sets of landmarks defined on multiple images,
as we demonstrate in this section.  The approach is similar in spirit to the
arithmetic mean, which arises in calculations of
the MAP point for Gaussian samples.

We use the second splitting prior and
start with two sets of landmark points $\vec q^a$ and
$\vec q^b$. We wish to find a third set of landmark points
$\vec q^*$  that match both sets $a,b$ according to some measure.
We introduce momenta $\vec p^{*a}$ and
$\vec p^{*b}$. Classical landmark matching gives momenta $\vec
p^{*a}$ that flows $\vec q^*$ to $\vec q^{a}$, and
similarly for $b$. This can be done for any $\vec q^*$. The second splitting prior expresses our preference for less deformation and coupling of the two momenta, and makes $\vec q^*$ well defined.

The second splitting prior gives a
distribution on $(\vec p_{1/2}, \vec q_{1/2}, \tilde{\vec
  p}_{1/2})$ proportional to \cref{prior2}. Substituting $\beta=2\lambda/\sigma^2$, it is \begin{equation}
\exp\pp{-\beta H(\vec p(1/2),\vec q(1/2))}%
\exp\pp{-\frac \beta{4\lambda }\norm{\tilde{\vec p}(1/2)%
		-(e^{-\lambda \,    \mathcal{G}(\vec q(1/2) ) } \otimes I_d)%
		\vec p(1/2)}^2}.%
\end{equation}
 When coupled with the likelihood function for data $\vec q^r$ and $\vec q^t$ given by
\[
\exp\pp{-
\frac{1}{2\delta^2}%
\pp{%
	\norm{\vec q^r-S_q(1/2; 0, [-\vec  p_{1/2},\vec q_{1/2}] ) }^2%
	+\norm{\vec q^t-S_q(1/2; 0, [\tilde{\vec{p}}_{1/2},\vec{q}_{1/2}] ) }^2%
}%
},
\]
 we can write down the posterior pdf. Then, to find the MAP point, we minimise the objective function
\begin{gather}
\begin{split}
&F(\vec p_{1/2}, \vec q_{1/2},\tilde{\vec p}_{1/2})%
 \coloneq \beta H(\vec{p}_{1/2},\vec q_{1/2})%
+\frac {\beta}{4 \lambda}%
\norm{\tilde {\vec{p}}_{1/2}-e^{-\lambda
    \mathcal{G}(\vec{q}_{1/2})} \vec{p}_{1/2}}^2\\%
&\qquad  %
+\frac{1}{2\delta^2}%
\pp{%
	\norm{\vec q^r-S_q(1/2; 0, [-\vec  p_{1/2},\vec q_{1/2}] ) }^2%
    +\norm{\vec q^t-S_q(1/2; 0, [\tilde{\vec{p}}_{1/2},\vec{q}_{1/2} ]) }^2%
    }.
    \end{split}\label{eq:av2}
\end{gather}
This comprises the regulariser due to  Gibbs' distribution, a
penalty for changing the momentum at $t=1/2$, and two
landmark-matching terms. The minimiser of $F$ gives the MAP point. We are interested in using $\vec q^*=\vec q_{1/2}$ as the average landmark set.

Before discussing numerical experiments, we describe the limiting properties of the MAP point as $\lambda, \beta$ are varied. In the following, we assume that $B^N$ is a convex subset of $\real^{dN}$ and that $\vec q^r, \vec q^t\in B^N$.

\begin{lemma}\label[lemma]{lemma1}
	With $\vec p_{1/2}=\tilde{\vec p}_{1/2}=\vec 0$, the minimiser of
	\[
	f(\vec q)\coloneq{%
		\norm{\vec q^r-S_q(1/2; 0,[-\vec  p_{1/2},\vec q ] )}^2%
		+\norm{\vec q^t-S_q(0;1/2, [\tilde{\vec{p}}_{1/2},\vec{q}])  }^2%
	}
	\]
	over $\vec q_{1/2}\in B^N$
	is $\vec q_{1/2}=(\vec q^r+\vec q^t)/2$. Hence,
	\[
	\min_{(\vec p_{1/2}, \vec{q}_{1/2}, \tilde{\vec p}_{1/2}) \in \real^{dN}\times B^N \times \real^{dN}} F(\vec p_{1/2}, \vec{q}_{1/2}, \tilde{\vec p}_{1/2}) \le \frac{1}{4\delta^2} \norm{\vec q^r-\vec q^t}^2.
	\]
\end{lemma}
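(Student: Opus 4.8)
The plan is to produce an explicit feasible point of the minimisation of $F$ that already realises the claimed upper bound, after first solving the reduced problem in which both momenta are set to zero. The crucial observation is that $[\vec 0,\vec q]$ is an equilibrium of the Hamiltonian flow \cref{eq:ham} for every position $\vec q$: since $\nabla_{\vec p_i}H=\sum_j \vec p_j\,G(\vec q_i,\vec q_j)$ vanishes at $\vec p=\vec 0$, and $\nabla_{\vec q_i}H$ is quadratic in $\vec p$ and hence also vanishes there, both $d\vec q/dt$ and $d\vec p/dt$ are zero along the trajectory issuing from $[\vec 0,\vec q]$. Therefore $S_q(t;s,[\vec 0,\vec q])=\vec q$ for all $s,t$, in either time direction, so with $\vec p_{1/2}=\tilde{\vec p}_{1/2}=\vec 0$ the function $f$ collapses to the pure-position least-squares objective $f(\vec q)=\norm{\vec q^r-\vec q}^2+\norm{\vec q^t-\vec q}^2$.

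Next I would minimise this quadratic over $B^N$. It is strictly convex, and setting its gradient to zero gives the unconstrained minimiser $\vec q=(\vec q^r+\vec q^t)/2$. Because $B^N$ is convex and contains both $\vec q^r$ and $\vec q^t$, it contains their midpoint, so this is also the constrained minimiser over $B^N$; evaluating, $f\big((\vec q^r+\vec q^t)/2\big)=2\cdot\tfrac14\norm{\vec q^r-\vec q^t}^2=\tfrac12\norm{\vec q^r-\vec q^t}^2$, which proves the first assertion of the lemma.

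Finally I would evaluate the full objective \cref{eq:av2} at the feasible triple $(\vec p_{1/2},\vec q_{1/2},\tilde{\vec p}_{1/2})=\big(\vec 0,(\vec q^r+\vec q^t)/2,\vec 0\big)$. The term $\beta H(\vec 0,\vec q_{1/2})$ vanishes because $H$ is quadratic in $\vec p$, and the momentum-penalty term $\tfrac{\beta}{4\lambda}\norm{\vec 0-e^{-\lambda\mathcal{G}(\vec q_{1/2})}\vec 0}^2$ vanishes trivially, leaving exactly $\tfrac{1}{2\delta^2}f\big((\vec q^r+\vec q^t)/2\big)=\tfrac{1}{4\delta^2}\norm{\vec q^r-\vec q^t}^2$. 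Since the infimum of $F$ over its domain is at most its value at this single point, the stated inequality follows. The argument is essentially elementary; there is no real obstacle, the only points needing care being the equilibrium observation (zero momentum makes the Hamiltonian flow stationary, so the landmark-matching terms reduce to the position-only problem) and the convexity remark keeping the midpoint inside $B^N$.
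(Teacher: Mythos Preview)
Your proof is correct and follows essentially the same approach as the paper's own proof: observe that zero momentum makes the Hamiltonian flow stationary so $f$ reduces to a pure least-squares problem in $\vec q$, minimise this at the midpoint, and then evaluate $F$ at the resulting feasible triple. You include a couple of extra details the paper leaves implicit (the convexity of $B^N$ keeping the midpoint feasible, and the explicit check that both $H$ and the momentum-penalty term vanish), but the argument is the same.
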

\begin{proof}
	 When $\vec p=\vec p_{1/2}=\tilde{\vec p}_{1/2}=\vec 0$,
	 $H=0$ and  $S_q(s;t,[\vec p,\vec q])=\vec q$ for all $s,t$.
	Hence, $f(\vec q)=\Vpair{ \vec q^r-\vec q}^2+\Vpair{\vec q^t-\vec q}^2$, which is minimised by $\vec q_{1/2}= (\vec q^r+\vec q^t)/2$.
\end{proof}

\begin{corollary} Assume that $G(\vec q_i)$ is uniformly bounded over $\vec q_i\in B\subset \real^d$. Then,
	as $\lambda\to 0$, $\vec q_{1/2}$ converges to $S_q(1/2;0,[\vec p_0,\vec q_0])$, where $[\vec p_0,\vec q_0]$ is the MAP point for \cref{MAP1F}.\label[corollary]{cor_lam}
\end{corollary}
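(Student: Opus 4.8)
The plan is to show that, as $\lambda\to0$ with $\beta$ held fixed (so that the weight $\frac{\beta}{4\lambda}$ in \cref{eq:av2} diverges), the objective in \cref{eq:av2} reduces, under the change of variables supplied by the time-$1/2$ Hamiltonian flow, to the first-splitting objective in \cref{MAP1F}, and then to deduce convergence of the minimisers by the usual equi-coercivity plus $\Gamma$-convergence argument. Write $F_{\mathrm I}$ for the objective in \cref{MAP1F} and $F_{\mathrm{II}}^\lambda$ for the objective in \cref{eq:av2}.

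First I would show the coupling penalty collapses at the minimiser. Let $(\vec p_{1/2}^\lambda,\vec q_{1/2}^\lambda,\tilde{\vec p}_{1/2}^\lambda)$ minimise $F_{\mathrm{II}}^\lambda$. By \cref{lemma1} the minimum value is at most $\frac{1}{4\delta^2}\norm{\vec q^r-\vec q^t}^2$, a bound independent of $\lambda$; since every term of $F_{\mathrm{II}}^\lambda$ is non-negative, this bounds $\beta H(\vec p_{1/2}^\lambda,\vec q_{1/2}^\lambda)$ and the penalty $\frac{\beta}{4\lambda}\norm{\tilde{\vec p}_{1/2}^\lambda-e^{-\lambda\mathcal{G}(\vec q_{1/2}^\lambda)}\vec p_{1/2}^\lambda}^2$ separately. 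The latter gives $\norm{\tilde{\vec p}_{1/2}^\lambda-e^{-\lambda\mathcal{G}(\vec q_{1/2}^\lambda)}\vec p_{1/2}^\lambda}=\order{\sqrt{\lambda}}$; since $\vec q_{1/2}^\lambda\in B^N$, which is compact, and $\mathcal{G}(\vec q_{1/2}^\lambda)$ is uniformly bounded by hypothesis, $\norm{e^{-\lambda\mathcal{G}(\vec q_{1/2}^\lambda)}-I_{dN}}=\order{\lambda}$, and, with $\vec p_{1/2}^\lambda$ bounded (this follows from the energy bound and uniform coercivity of $H(\cdot,\vec q)$ in $\vec p$, automatic for the uniformly positive-definite clamped-plate kernel and otherwise needing the landmarks to remain separated), we get $\tilde{\vec p}_{1/2}^\lambda-\vec p_{1/2}^\lambda\to\vec 0$. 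Passing to a subsequence, $(\vec p_{1/2}^\lambda,\vec q_{1/2}^\lambda)\to(\vec p_\infty,\vec q_\infty)$ and $\tilde{\vec p}_{1/2}^\lambda\to\vec p_\infty$.

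Next I would change variables. Let $(\vec p_0^\lambda,\vec q_0^\lambda)=S(0;1/2,[\vec p_{1/2}^\lambda,\vec q_{1/2}^\lambda])$, the time-$1/2$ backward Hamiltonian flow of $(\vec p_{1/2}^\lambda,\vec q_{1/2}^\lambda)$; since the flow of \cref{eq:ham} is a diffeomorphism of $\real^{2dN}$, this is a change of variables. Conservation of $H$ along the flow gives $H(\vec p_0^\lambda,\vec q_0^\lambda)=H(\vec p_{1/2}^\lambda,\vec q_{1/2}^\lambda)$; the semigroup property gives $S_q(1;0,[\vec p_0^\lambda,\vec q_0^\lambda])=S_q(1/2;0,[\vec p_{1/2}^\lambda,\vec q_{1/2}^\lambda])$; and the time-reversal identity $S_q(0;t,[\vec p,\vec q])=S_q(t;0,[-\vec p,\vec q])$, valid because $H$ is even in $\vec p$ and already used to define $\vec Z$ above, gives $\vec q_0^\lambda=S_q(1/2;0,[-\vec p_{1/2}^\lambda,\vec q_{1/2}^\lambda])$. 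Substituting these identities into $F_{\mathrm{II}}^\lambda$ and replacing $\tilde{\vec p}_{1/2}^\lambda$ by $\vec p_{1/2}^\lambda$ in the last matching term, at the cost of an error $\order{\norm{\tilde{\vec p}_{1/2}^\lambda-\vec p_{1/2}^\lambda}}\to0$ (Lipschitz continuity of $S_q$ on compacts), shows that $F_{\mathrm{II}}^\lambda$ at the minimiser equals $F_{\mathrm I}(\vec p_0^\lambda,\vec q_0^\lambda)$ plus the non-negative penalty plus $o(1)$; hence $\min F_{\mathrm{II}}^\lambda\ge\min F_{\mathrm I}-o(1)$. For the reverse bound I would take a minimiser $(\vec p_0^*,\vec q_0^*)$ of $F_{\mathrm I}$, set $[\vec p^*,\vec q^*]=S(1/2;0,[\vec p_0^*,\vec q_0^*])$ and $\tilde{\vec p}^*=e^{-\lambda\mathcal{G}(\vec q^*)}\vec p^*$ so that the penalty vanishes, and use the same identities to see $F_{\mathrm{II}}^\lambda(\vec p^*,\vec q^*,\tilde{\vec p}^*)\to F_{\mathrm I}(\vec p_0^*,\vec q_0^*)=\min F_{\mathrm I}$. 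Therefore $\min F_{\mathrm{II}}^\lambda\to\min F_{\mathrm I}$ and $(\vec p_0^\lambda,\vec q_0^\lambda)$ is a minimising sequence for $F_{\mathrm I}$; if the MAP point $(\vec p_0,\vec q_0)$ of \cref{MAP1F} is unique, $(\vec p_0^\lambda,\vec q_0^\lambda)\to(\vec p_0,\vec q_0)$, and then $\vec q_{1/2}^\lambda=S_q(1/2;0,[\vec p_0^\lambda,\vec q_0^\lambda])\to S_q(1/2;0,[\vec p_0,\vec q_0])$ by continuity of the flow.

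The main difficulty will be the compactness used in the first step: keeping the momenta $\vec p_{1/2}^\lambda$ bounded and the configurations $\vec q_{1/2}^\lambda$ non-degenerate along the minimising sequence, so that $S$, its inverse, and $\mathcal{G}^{-1}$ stay uniformly regular --- immediate for uniformly positive-definite kernels such as the clamped-plate Green's function, but requiring a separation hypothesis for kernels such as the Gaussian. A secondary point is that the MAP point of $F_{\mathrm I}$ need not be unique, so the cleanest precise statement is subsequential convergence to its set of minimisers; and one should note explicitly that $\beta$ is held fixed as $\lambda\to0$.
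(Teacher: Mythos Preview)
Your argument follows essentially the same route as the paper's: use the $\lambda$-independent upper bound from \cref{lemma1} to force the coupling penalty to vanish, deduce $\tilde{\vec p}_{1/2}-\vec p_{1/2}\to\vec 0$ from boundedness of $\mathcal{G}$, and then reparametrise via the time-$1/2$ Hamiltonian flow together with conservation of $H$ to identify the limiting objective with \cref{MAP1F}. The paper writes the change of variables as $[\vec p_0,\vec q_0]=R\,S(1/2;0,R[\vec p_{1/2},\vec q_{1/2}])$, which is the same map you use expressed through the time-reversal $R$.

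Where you differ is in rigour rather than strategy. The paper's proof passes directly from ``$\min F$ bounded'' to ``$\min F\to\min F_{\mathrm I}$'' and then to the conclusion about $\vec q_{1/2}$, without separating the $\liminf$ and $\limsup$ inequalities or discussing why convergence of minimum values yields convergence of minimisers. Your two-sided bound (lower bound from substituting the minimiser, upper bound from a recovery sequence with the penalty set to zero) makes this step explicit, and you correctly flag that without uniqueness of the MAP point of \cref{MAP1F} one only gets subsequential convergence to the set of minimisers. You also isolate the compactness hypothesis needed to keep $\vec p_{1/2}^\lambda$ bounded---a point the paper uses implicitly when writing $e^{-\lambda\mathcal{G}}\vec p_{1/2}\to\vec p_{1/2}$. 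These additions are genuine improvements in precision over the paper's sketch, though the underlying mechanism is identical.
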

\begin{proof} As $\min F$ is bounded independently of $\lambda$, we know that 
\[
\frac\beta\lambda\norm{\tilde{\vec p}_{1/2}-e^{-\lambda \mathcal{G} (\vec q_{1/2})} {\vec p}_{1/2}}^2
\]
is bounded as $\lambda\to 0$.  Hence, $\tilde{\vec p}_{1/2} - e^{-\lambda \mathcal{G}(\vec q_{1/2} ) } {\vec p}_{1/2} \to \vec 0$.  When all entries of $\mathcal{G}$ are bounded,  $e^{-\lambda \mathcal{G}(\vec q_{1/2} ) }\to I_N$ as $\lambda \to 0$.  Therefore, $\vec p_{1/2}-\tilde{\vec p}_{1/2}\to\vec 0$ and $\vec Z(t)$ as defined in \cref{Z} is the solution of the Hamiltonian equation \cref{eq:ham}  on $[0,1]$ in the limit $\lambda\to 0$. Let $[\vec p_0,\vec q_0]=R S(1/2;0, R[\vec p_{1/2},\vec q_{1/2}])$ for $R[\vec p,\vec q]=[-\vec p,\vec q]$. Then,
\begin{align*}
\min F \to& \min \beta H(\vec p_{1/2},\vec q_{1/2})+0\\
&+\frac{1}{2\delta^2}\bp{\norm{\vec q^r-S_q(1/2;0,[-\vec p_{1/2}, \vec q_{1/2}])}^2+\norm{\vec q^t-S_q(1/2;0,[\vec p_{1/2}, \vec q_{1/2}])}}\\
=& \min \beta H(\vec p_{0},\vec q_{0})%
+\frac{1}{2\delta^2}\bp{\norm{\vec q^r-\vec q_0}^2+\norm{\vec q^t-S_q(1;0,[\vec p_0,\vec q_0])}}.
\end{align*}
Here we use the fact that $H$ is constant along solutions of \cref{eq:ham}. The last expression is the same as \cref{MAP1F}, as required
\end{proof}
\begin{corollary} If $\mathcal{G}(\vec q)$ is uniformly positive definite over $\vec q\in B^N\subset \real^{dN}$, then in the limit
$\beta\to\infty$, $\vec q_{1/2}$ converges to the arithmetic average $(\vec q^r+\vec q^t)/2$.\label[corollary]{cor_beta}
\end{corollary}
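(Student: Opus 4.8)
\medskip
\noindent\textbf{Proof plan.}
The plan is to trap $\min F$, for the objective $F$ of \cref{eq:av2}, between the $\beta$-independent upper bound $C_0\coloneq\frac1{4\delta^2}\Vpair{\vec q^r-\vec q^t}^2$ supplied by \cref{lemma1} and a lower bound assembled only from the two landmark-matching terms of $F$. The point of assuming $\mathcal{G}$ uniformly positive definite is that it makes both midpoint momenta $\vec p_{1/2}$ and $\tilde{\vec p}_{1/2}$ collapse to $\vec 0$ as $\beta\to\infty$; once they do, the Hamiltonian flow map reduces to the identity, so the only surviving part of $F$ is the quadratic mismatch in the positions, and the parallelogram identity then pins $\vec q_{1/2}$ to $(\vec q^r+\vec q^t)/2$.

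Concretely, I would let $\vec z^\beta=(\vec p_{1/2}^\beta,\vec q_{1/2}^\beta,\tilde{\vec p}_{1/2}^\beta)$ minimise $F$ (or be a near-minimiser, should the infimum not be attained) and proceed in three steps. First, \cref{lemma1} gives $F(\vec z^\beta)\le C_0$ for every $\beta$; since $H(\vec p,\vec q)=\frac12\vec p^\trans(\mathcal{G}(\vec q)\otimes I_d)\vec p\ge\frac c2\Vpair{\vec p}^2$, with $c>0$ the uniform lower eigenvalue bound on $\mathcal{G}$, this forces $\Vpair{\vec p_{1/2}^\beta}^2\le\frac{2}{c\beta}F(\vec z^\beta)\le\frac{2C_0}{c\beta}\to0$, and then, because the spectral norm of $e^{-\lambda\mathcal{G}(\vec q_{1/2}^\beta)}$ is at most one, the bound $\frac\beta{4\lambda}\Vpair{\tilde{\vec p}_{1/2}^\beta-e^{-\lambda\mathcal{G}(\vec q_{1/2}^\beta)}\vec p_{1/2}^\beta}^2\le C_0$ forces $\tilde{\vec p}_{1/2}^\beta\to\vec 0$ as well. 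Second, since $S_q(1/2;0,[\vec 0,\vec q])=\vec q$ (zero momentum keeps $H\equiv0$ and the flow stationary) and the flow map $S_q$ generated by \cref{eq:ham} depends continuously on its initial data for a smooth bounded kernel $G$, I would conclude that both Hamiltonian-flow evaluations appearing in $F$, namely $S_q(1/2;0,[-\vec p_{1/2}^\beta,\vec q_{1/2}^\beta])$ and $S_q(1/2;0,[\tilde{\vec p}_{1/2}^\beta,\vec q_{1/2}^\beta])$, equal $\vec q_{1/2}^\beta+o(1)$ as $\beta\to\infty$. Third, discarding the nonnegative $\beta H$ and coupling terms from $F$ and using the identity $\Vpair{\vec q^r-\vec x}^2+\Vpair{\vec q^t-\vec x}^2=\frac12\Vpair{\vec q^r-\vec q^t}^2+2\Vpair{\vec x-(\vec q^r+\vec q^t)/2}^2$ with $\vec x=\vec q_{1/2}^\beta$, I obtain
\begin{align*}
C_0\ \ge\ F(\vec z^\beta)\ &\ge\ \frac1{2\delta^2}\bp{\Vpair{\vec q^r-\vec q_{1/2}^\beta}^2+\Vpair{\vec q^t-\vec q_{1/2}^\beta}^2}+o(1)\\
&=\ C_0+\frac1{\delta^2}\Vpair{\vec q_{1/2}^\beta-(\vec q^r+\vec q^t)/2}^2+o(1),
\end{align*}
so that $\Vpair{\vec q_{1/2}^\beta-(\vec q^r+\vec q^t)/2}\to0$; the limit $(\vec q^r+\vec q^t)/2$ lies in $B^N$ by convexity, which is what is claimed.

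The step I expect to need the most care is not the sandwich itself but the regularity bookkeeping around the Hamiltonian flow: I must know that $S_q(1/2;0,\cdot)$ is well defined (no finite-time blow-up on $[0,1/2]$) and that the $o(1)$ in the second step is uniform over the relevant range of $\vec q_{1/2}^\beta$. Both follow, once $\vec p_{1/2}^\beta\to\vec 0$, from a Gr\"onwall estimate on \cref{eq:ham} using boundedness of $G$ and $\nabla G$; the same estimate shows the minimising $\vec q_{1/2}^\beta$ stay bounded, since otherwise $S_q(1/2;0,[-\vec p_{1/2}^\beta,\vec q_{1/2}^\beta])$, and hence $\Vpair{\vec q^r-S_q(1/2;0,[-\vec p_{1/2}^\beta,\vec q_{1/2}^\beta])}^2$, would be unbounded, contradicting $F(\vec z^\beta)\le C_0$. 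Modulo these routine facts, which hold for the Gaussian kernel used throughout, the corollary reduces to the two-line estimate above.
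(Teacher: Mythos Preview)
Your proposal is correct and follows essentially the same route as the paper: bound $\min F$ above by \cref{lemma1}, use uniform positive definiteness of $\mathcal{G}$ to force $\vec p_{1/2}\to\vec 0$ and then $\tilde{\vec p}_{1/2}\to\vec 0$, and reduce the residual data terms to the quadratic minimised at the arithmetic mean. The paper's version is terser (it rescales $F$ by $1/\beta$ and asserts the limit without writing out the parallelogram sandwich or the flow-continuity bookkeeping), whereas you supply those details explicitly; your treatment of the $o(1)$ uniformity and the boundedness of $\vec q_{1/2}^\beta$ fills gaps the paper leaves implicit.
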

\begin{proof}
 Rescale the objective function
\begin{align*}
&\frac{1}{\beta}F(\vec p_{1/2}, \vec q_{1/2},\tilde{\vec p}_{1/2})%
\coloneq  H(\vec{p}_{1/2},\vec q_{1/2})%
+\frac {1}{4 \lambda}%
\norm{\tilde {\vec{p}}_{1/2}-e^{-\lambda
\mathcal{G}(\vec{q}_{1/2})} \vec{p}_{1/2}}^2\\%
&\qquad  %
+\frac{1}{2\beta\delta^2}%
\pp{%
	\norm{\vec q^r-S_q(1/2; 0, [-\vec  p_{1/2},\vec q_{1/2}] ) }^2%
		+\norm{\vec q^t-S_q(1/2; 0, [\tilde{\vec{p}}_{1/2},\vec{q}_{1/2}] ) }^2%
	}.
	\end{align*}
	This converges to zero as $\beta\to \infty$. Hence, $H(\vec p_{1/2},\vec q_{1/2})\to 0$, so that $\vec p_{1/2}\to\vec 0$ if $\mathcal{G}$ is uniformly positive definite. The second term implies that $\tilde{\vec p}_{1/2}\to \vec 0$. Then $\min F \to \frac{1}{2 \delta^2} (\Vpair{\vec q^r-\vec q_{1/2} }^2+\Vpair{\vec q^t-\vec q_{1/2}}^2)$.
\cref{lemma1} gives $\vec q_{1/2}$ is the arithmetic average.
		\end{proof}

The reverse limits are degenerate: As $\lambda\to\infty$,  $\tilde{\vec p}_{1/2}$ and $\vec{p}_{1/2}$ are not coupled and may be chosen independently. In particular, the second of the data terms can be always  be made zero. The remaining terms are minimised by taking $\vec q_{1/2}=\vec q^r$ and $\vec p_{1/2}=0$. For the limit as the noise grows and overwhelms the system,  $\beta\to 0$, there is no Hamiltonian or momenta coupling, and only data terms remain. Then $\vec q_{1/2}$ can be placed anywhere, as the momenta can be chosen arbitrarily without cost. This case  has a very shallow energy landscape and $\vec q_{1/2}$ is not well determined. Both these are outside the regime used in the derivation of the approximation \cref{eq:Ct}.

When the terms are balanced, the optimisation must achieve some
accuracy in flowing to the landmark points, coupling the
 momenta, and moderation of the energy in $H$. We see
in \cref{fig:yesav} examples where the arithmetic average and MAP average are very different.

\subsubsection{Computations with two landmark sets}
The MAP point can be found using unconstrained numerical
optimisation. The objective function is more complicated this time,
due to the matrix exponential
$e^{-\lambda \mathcal{G}(\vec q_{1/2})}$ and the required
derivative of the matrix exponential (for gradient-based optimisation methods). These functions are
available in Python's SciPy library, amongst others. The
Laplace method can be applied, again using Gauss--Newton
approximations and removal of negative eigenvalues, to
determine an approximation to the covariance matrix of the
posterior distribution.

To define an average of two sets of landmarks
$\vec q^{a,b}$, we choose $\vec q^r=\vec q^a$ and
$\vec q^t=\vec q^b$ and find the MAP point
$(\vec p_{1/2}, \vec q_{1/2}, \tilde{\vec p}_{1/2})$. The landmarks $\vec q^*=\vec q_{1/2}$ are used as the average of $\vec q^r$ and $\vec q^t$.  An example of the resulting average  is compared to the arithmetic average in \cref{fig:2av}.

\begin{figure}
  \centering
  \includegraphics{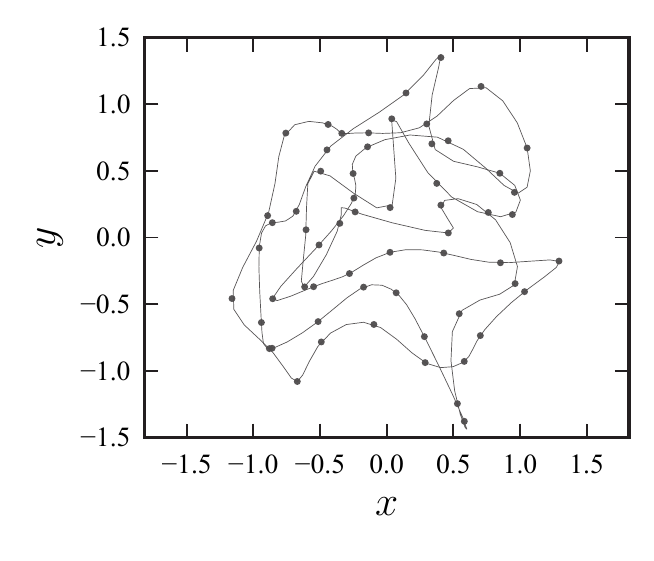}
  \includegraphics{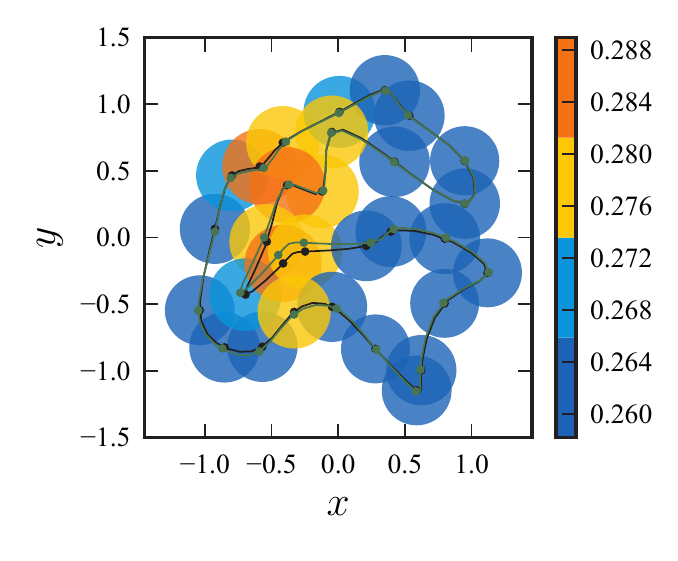}
  \caption{The left-hand plot shows two sets of
    landmarks. The right-hand plot shows two versions of the
    average landmarks. The black shape is calculated
    using the second splitting prior (with parameter
    $\lambda=0.1$, $\beta=25$) and assuming landmarks are
    known to $\Nrm(\vec 0, \delta^2 I)$ errors
    ($\delta^2=0.005$, so $\delta\approx 0.07$). The discs indicate one standard deviation of the posterior distribution (via the Laplace/Gauss--Newton approximation). The dark green shape is an arithmetic
    average.} \label{fig:2av}
\end{figure}

\subsubsection{Generalisation to multiple landmark sets}
We generalise the second splitting prior to allow for more landmark sets and thereby define an average of multiple landmark sets. Let $\vec q^*\in B\subset\real^{dN}$ be the desired average and let $\vec p^*\in\real^{dN}$ be an associated momenta. For the prior distribution, we assume $[\vec p^*,\vec q^*]$ follow the Gibbs distribution.
Let $\vec q^j\in B \subset \real^{dN}$  for $j=1,\dots,J$ denote the given data set of landmarks and associate to each  momenta $\vec p^j$.  We couple each $\vec p^j$ to $[\vec p^*,\vec q^*]$ via the time-one evolution of \cref{eq:ou}. With Gaussian errors in the approximation of the data $\vec q^j$ by the time-half evolution of the Hamiltonian system from $[\vec p^j,\vec q^*]$, this leads to the objective function for the MAP point:
\begin{gather}
\begin{split}
F(\vec p^*, \vec q^*, \vec p^j)%
&\coloneq \beta  H(\vec{p}^*,\vec q^*)%
+\frac {\beta}{4 \lambda}%
\sum_{j=1}^J \norm{ {\vec{p}}^j-e^{-\lambda
		\mathcal{G}(\vec{q}^*)} \vec{p}^*}^2\\%
&\qquad  %
+\frac{1}{2\delta^2}%
\sum_{j=1}^J
	\norm{\vec q^j-S_q(1/2;  0, [\vec  p^j,\vec q^*] ) }^2.
\end{split}\label{eq:multi}
\end{gather}
There are $J+1$ momenta and this objective does not reduce to \cref{eq:av2}, which depends on two momenta for $J=2$ landmark sets (see \cref{two}).  The limit as $\lambda\to 0$ is different and $\vec q^*$ cannot converge to the midpoint on the paths, as there is no such thing as a single flow between the landmark points for $J>2$. The extra momenta $\vec p^*$ is introduced as a substitute and provides a means of coupling the deformation for each landmark set to a single momentum. In contrast, as we now show, the limiting behaviour as $\beta\to \infty$ resembles the two-landmark average found by studying \cref{eq:av2}.
\begin{figure}
	\centering
	\includegraphics{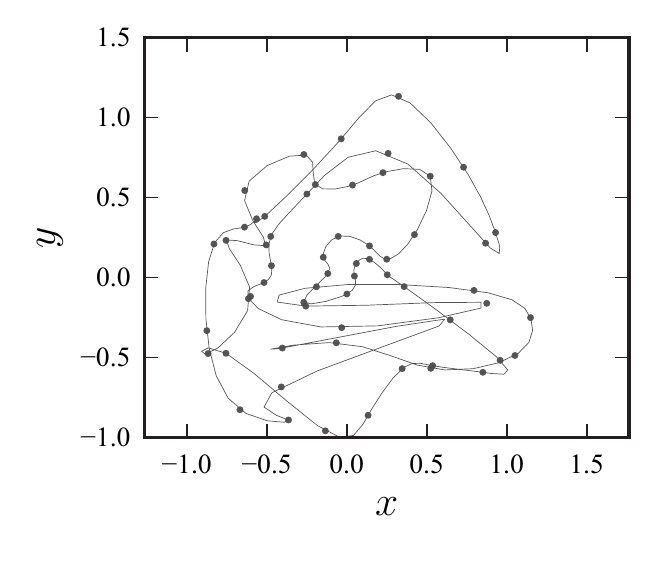}\\
	\includegraphics{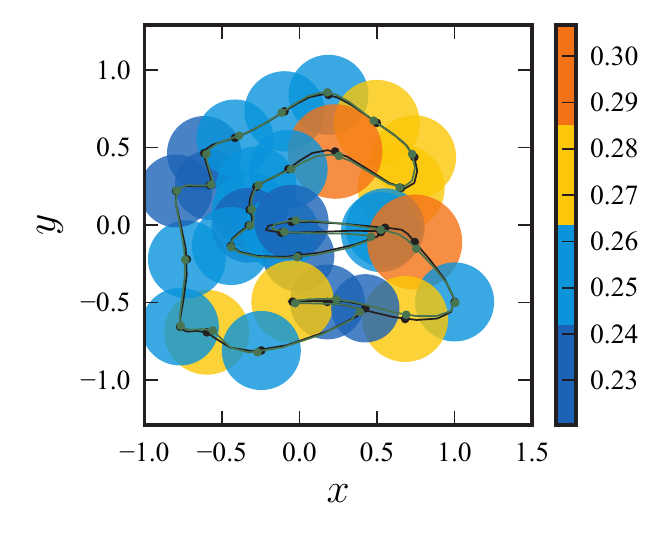}
	\includegraphics{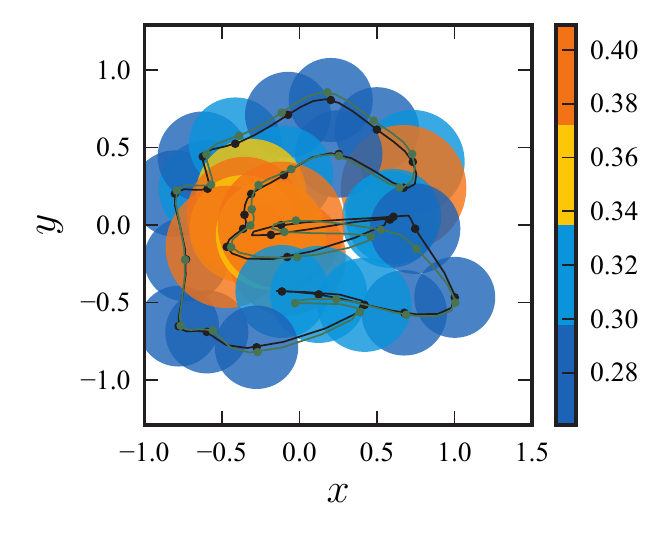}
	\caption{For the two sets of landmarks, the plots show averages (black lines) according to \cref{eq:av2} (left) and \cref{eq:multi} (right) with coloured discs showing one standard deviation. Both are close to the arithmetic average, shown in green, with the multi-set objective function being less close and having large standard deviations.} \label{two}
\end{figure}
\begin{theorem}\label{thm} Let $[\vec p^*,\vec q^*, \vec p^j]$ denote the minimiser of \cref{eq:multi}. Suppose that
	\begin{enumerate}
		\item $G(\vec q_i)$ is uniformly bounded over $\vec q_i\in B\subset \real^d$ and $\lambda\to 0$, or
		\item $\mathcal{G}(\vec q)$  is uniformly positive definite over $\vec q\in B^N\subset \real^{dN}$ and $\beta\to\infty$.
	\end{enumerate}
	In the limit,  $\vec q^*$ converges to the arithmetic average $(\vec q^1+\dots+\vec q^J)/J$.
\end{theorem}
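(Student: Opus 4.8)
The plan is to follow the template of \cref{cor_lam,cor_beta}: fix $\min F$ by a cheap upper bound, exploit that each of the three non-negative terms in \cref{eq:multi} is individually bounded to force the momenta to degenerate, and finally read off that $\vec q^*$ minimises a sum of squared distances, whose minimiser is the arithmetic mean. Write $\bar{\vec q}\coloneq(\vec q^1+\dots+\vec q^J)/J$, set $C_0\coloneq\frac1{2\delta^2}\sum_{j=1}^J\norm{\vec q^j-\bar{\vec q}}^2$, and recall that $\vec m\mapsto\sum_j\norm{\vec q^j-\vec m}^2$ is coercive and strictly convex with unique minimiser $\bar{\vec q}$. Evaluating \cref{eq:multi} at $\vec p^*=\vec 0$, $\vec p^j=\vec 0$, $\vec q^*=\bar{\vec q}$ gives $H=0$, a vanishing coupling term, and $S_q(1/2;0,[\vec 0,\bar{\vec q}])=\bar{\vec q}$, so $\min F\le C_0$ for every $\lambda,\beta,\delta$. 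Let $[\vec p^*,\vec q^*,\vec p^j]$ be a minimiser, put $\vec m^j\coloneq S_q(1/2;0,[\vec p^j,\vec q^*])$, and note that the data term alone gives $\norm{\vec q^j-\vec m^j}^2\le 2\delta^2 C_0$, so the $\vec m^j$ stay in a fixed bounded set.

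For case (2), with $\mathcal{G}$ uniformly positive definite and $\beta\to\infty$, I would argue as in \cref{cor_beta}. From $\beta H(\vec p^*,\vec q^*)\le\min F\le C_0$ and $H(\vec p^*,\vec q^*)=\frac12(\vec p^*)^\trans(\mathcal{G}(\vec q^*)\otimes I_d)\vec p^*$, uniform positive definiteness gives $\norm{\vec p^*}=\order{\beta^{-1/2}}\to 0$. Since the coupling term is bounded by $C_0$, $\sum_j\norm{\vec p^j-e^{-\lambda\mathcal{G}(\vec q^*)}\vec p^*}^2\le 4\lambda C_0/\beta\to 0$; as $\norm{e^{-\lambda\mathcal{G}(\vec q^*)}\vec p^*}\le\norm{\vec p^*}\to 0$ this forces $\vec p^j\to\vec 0$ for each $j$. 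The time-$1/2$ Hamiltonian flow with zero momentum is the identity and the flow map is continuous in the momentum, so $\vec m^j\to\vec q^*$, the objective tends to $\frac1{2\delta^2}\sum_j\norm{\vec q^j-\vec q^*}^2$, and the argument of \cref{lemma1} identifies the limiting $\vec q^*$ as $\bar{\vec q}$.

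Case (1), with $G$ uniformly bounded and $\lambda\to 0$, is the genuinely new part, since $H$ is no longer driven to zero by an overall rescaling. Boundedness of the coupling term still gives $\sum_j\norm{\vec p^j-e^{-\lambda\mathcal{G}(\vec q^*)}\vec p^*}^2\le 4\lambda C_0/\beta\to 0$; since $e^{-\lambda\mathcal{G}(\vec q^*)}\to I_N$, all the $\vec p^j$ (and $\vec p^*$) coincide in the limit, and by continuity of the flow map the $\vec m^j$ coincide: along a subsequence, $\vec m^j\to\vec m$ for all $j$. Then the data term tends to $\frac1{2\delta^2}\sum_j\norm{\vec q^j-\vec m}^2\ge C_0$, which with $\min F\le C_0$ yields $\min F\to C_0$; since the other two terms are non-negative, $\beta H(\vec p^*,\vec q^*)\to 0$, and since the data term converges to $C_0$ the coercive strict convexity forces $\vec m\to\bar{\vec q}$. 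Finally, $H$ is constant along solutions of \cref{eq:ham}, so the flow $\vec q(\cdot)$ started from $[\vec p^j,\vec q^*]$ obeys $\norm{d\vec q/dt}^2=\vec p^\trans(\mathcal{G}^2\otimes I_d)\vec p\le c\,\vec p^\trans(\mathcal{G}\otimes I_d)\vec p=2c\,H(\vec p^j,\vec q^*)$, with $c$ controlled by the uniform bound on $G$, and $H(\vec p^j,\vec q^*)\to 0$ because $\vec p^j\to\vec p^*$ and $H(\vec p^*,\vec q^*)\to 0$. Integrating over $[0,1/2]$ gives $\norm{\vec m-\vec q^*}\le\frac12\sqrt{2c\,H(\vec p^j,\vec q^*)}\to 0$, so $\vec q^*\to\vec m\to\bar{\vec q}$.

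The step I expect to be most delicate is the collapse of the endpoints $\vec m^j$ in case (1) --- and, symmetrically, the passage from $H\to 0$ to $\norm{\vec p^*}\to 0$ in case (2). Both rely on Lipschitz and coercivity bounds for the flow map $\vec p\mapsto S_q(1/2;0,[\vec p,\vec q])$ and on $\mathcal{G}$ being non-degenerate, which are only uniform on bounded sets of momenta and landmarks. I would run the argument along a minimising sequence and use that $B$, hence $B^N$, is bounded, or that $G$ and its derivatives are globally bounded as for the Gaussian kernel, so that the relevant trajectories stay in a fixed compact set; this is implicit in the standing hypotheses and in the proofs of \cref{cor_lam,cor_beta}.
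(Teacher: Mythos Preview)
Your case~(2) argument is identical to the paper's, which also simply defers to \cref{cor_beta}. For case~(1) you take a genuinely different route. After the common first step (the coupling bound forces $\vec p^j-e^{-\lambda\mathcal{G}(\vec q^*)}\vec p^*\to\vec 0$, hence $\vec p^j-\vec p^*\to\vec 0$), the paper simply observes that the two surviving terms $\beta H(\vec p^*,\vec q^*)$ and $\tfrac{1}{2\delta^2}\sum_j\norm{\vec q^j-S_q(1/2;0,[\vec p^*,\vec q^*])}^2$ are \emph{simultaneously} minimised at $\vec p^*=\vec 0$, $\vec q^*=\bar{\vec q}$: the first vanishes there, and the second then reduces to $\tfrac{1}{2\delta^2}\sum_j\norm{\vec q^j-\vec q^*}^2$, whose minimiser is $\bar{\vec q}$. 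You instead run an explicit squeeze---collapse the endpoints $\vec m^j$ to a common $\vec m$, bound the data term below by $C_0$, deduce $\min F\to C_0$ and hence $\beta H\to 0$ and $\vec m=\bar{\vec q}$---and then use the velocity estimate $\norm{d\vec q/dt}^2\le 2cH$ to transfer $\vec m=\bar{\vec q}$ back to $\vec q^*$. Your argument is longer but makes the limit passage quantitative and produces $H\to 0$ as a byproduct; the paper's is a two-line observation about the limiting objective. Both proofs are equally informal on the compactness issue you flag in your last paragraph (in particular, turning $\vec p^j-e^{-\lambda\mathcal{G}}\vec p^*\to\vec 0$ into $\vec p^j-\vec p^*\to\vec 0$ tacitly uses $\lambda\norm{\vec p^*}\to 0$, which neither proof justifies under hypothesis~(1) alone); you at least acknowledge the gap, the paper does not.
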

\begin{proof} The argument for $\beta\to\infty$ is the same as \cref{cor_beta}. We concentrate on the case $\lambda\to 0$. By arguing similarly to \cref{cor_lam}, $\min F$ and
	$
	(\beta/\lambda)\Vpair{{\vec p}^j - e^{-\lambda \mathcal{G} (\vec q^*)} {\vec p}^*}^2
	$
	are bounded as $\lambda\to 0$.  Hence, ${\vec p}^j - e^{-\lambda \mathcal{G}(\vec q^* ) } {\vec p}^* \to \vec 0$ and, because entries of $\mathcal{G}$ are bounded,  we know that  $\vec p^j-{\vec p}^*\to\vec 0$. We can minimise the two remaining terms separately: $\beta H(\vec p^*,\vec q^*)$ is minimised by $\vec p^*=\vec 0$ and the data term is minimised when $S_q(1/2; 0,[\vec p^j,\vec q^*])$ equals the arithmetic average. This is achieved when $\vec p^j=\vec p^*=\vec 0$ and $\vec q^*$ is the arithmetic average.
\end{proof}
The methodology for this objective are similar to \cref{eq:av2}: the minimum is found by unconstrained numerical optimisation and $\vec q^*$ is used as an average. The Hessian can be evaluated at the MAP point to define an approximate posterior covariance matrix.

An example of the resulting average for sixteen samples is compared to the arithmetic average in \cref{fig:tenav}. The standard deviation is reduced in comparison to \cref{fig:2av},  from the range $[0.26,0.29]$ down to $[0.15,0.18]$, which is roughly a factor $1.6$ decrease from a factor eight increase in the number of samples, and less than expected from the central limit theorem. \cref{fig:yesav} shows computations of 64 and 256 samples from the same distribution of landmark sets. The distinction between arithmetic and MAP averages is even stronger. The standard deviations reduce but again moderately compared to the factor of two  expected from a factor four increase in the number of samples.

The final example in \cref{fig:rockav} shows how the MAP average moves closer to the arithmetic average when the value of $\beta$ is increased from $\beta=50$ to $\beta=100$, as discussed in \cref{thm}.

\begin{figure}
	\centering
	\includegraphics{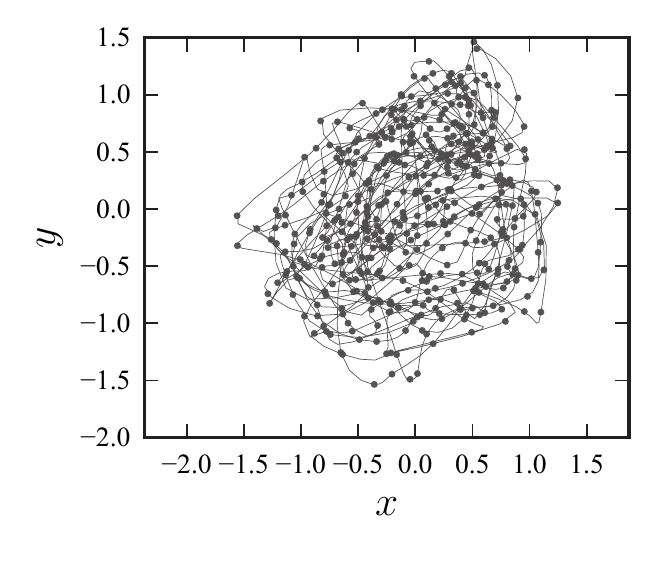}
	\includegraphics{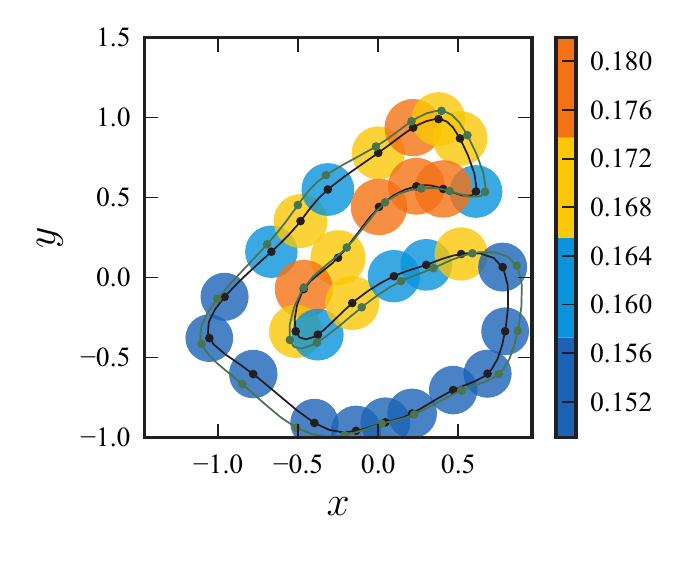}
	\caption{Similar to \cref{fig:2av}, except sixteen landmark sets are taken and the MAP average is computed using the objective function \cref{eq:multi}.} \label{fig:tenav}
\end{figure}
\begin{figure}
	\centering
	\includegraphics{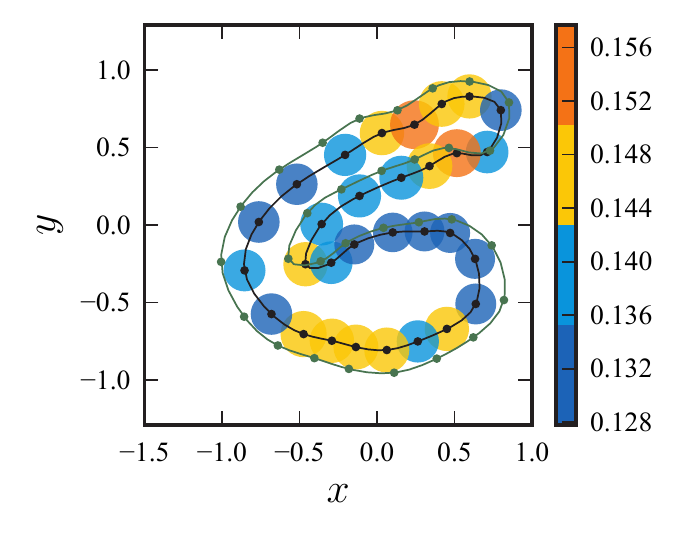}
	\includegraphics{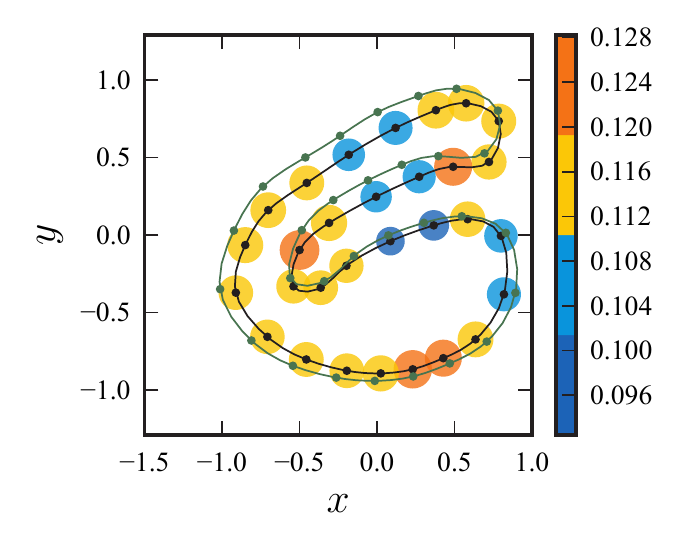}\\
	\includegraphics{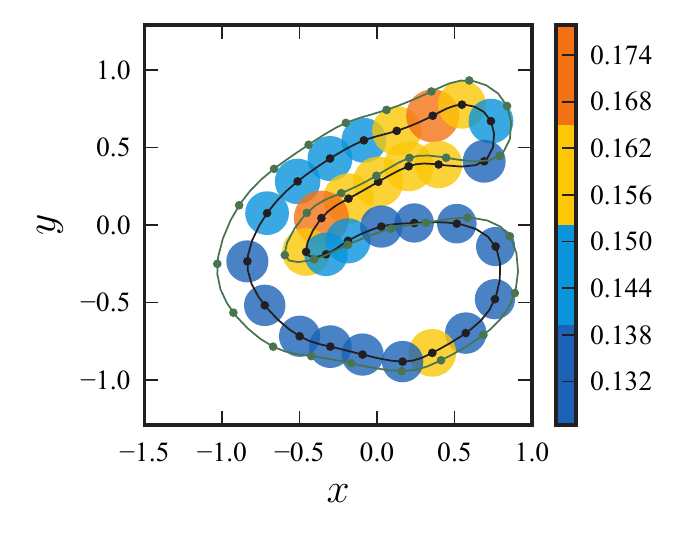}
	\includegraphics{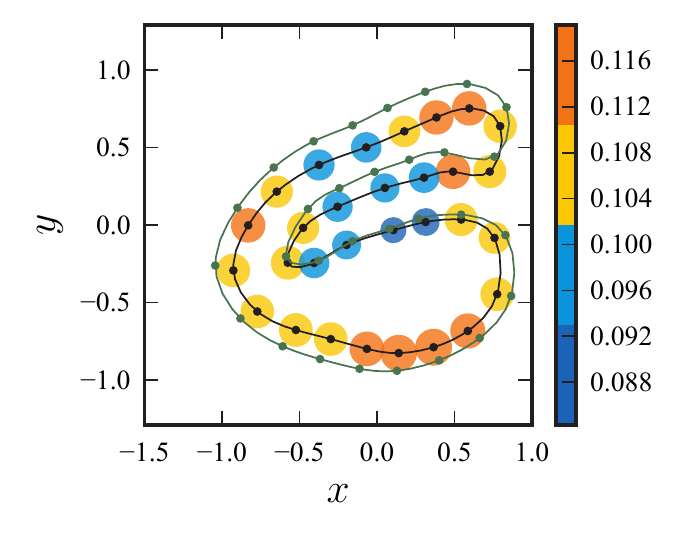}
	\caption{Here, we show two computations for the average of 64 (left) and 256 (right) independent samples, using the second splitting prior with $\lambda=0.1$ and $\beta=25$ (black line) and the arithmetic average (green line). The rows are calculations of the same averages for independent samples. The colours indicate one standard deviation of the computed posterior distribution.} \label{fig:yesav}
\end{figure}
\begin{figure}
	\centering
	\includegraphics{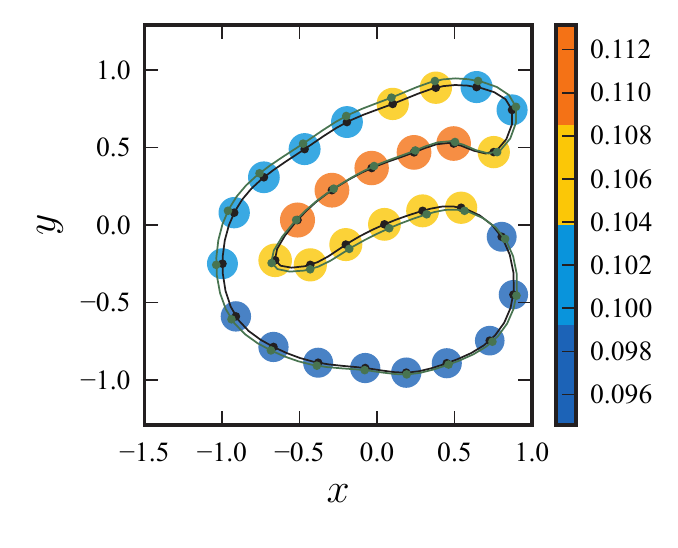}
	\includegraphics{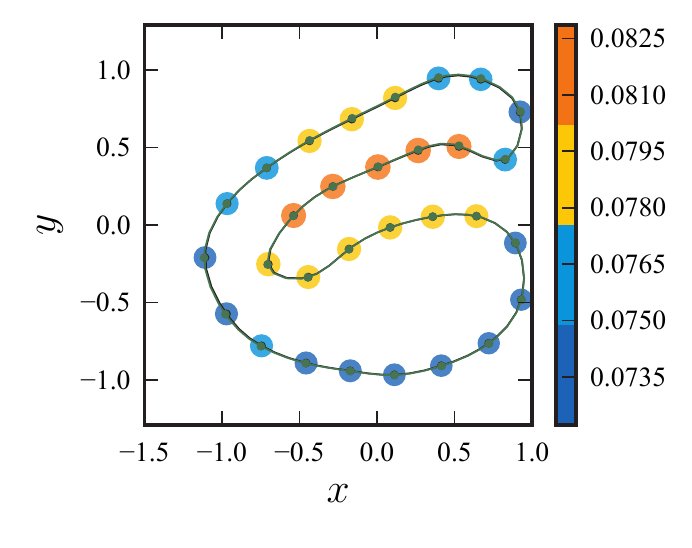}
	\caption{The plots show the averages (black) provided by the MAP point for $\lambda=0.1$ with $\beta=50$ (left) and $\beta=100$ (right) in comparison to the arithmetic average (green) for 64 landmark sets. As we shown in \cref{thm}, the averages become closer as $\beta$ is increased.} \label{fig:rockav}
\end{figure}
\section{Conclusion}
This article introduces a type of Langevin equation for performing image registration by landmarks in the presence of uncertainty. The Langevin equation is used to define a prior distribution on the set of diffeomorphisms. It is computationally difficult to sample the diffusion bridge for the Langevin equation. To allow computation, we introduced three approximate prior distributions: the first by linearising the Langevin equation about the solution of a Hamiltonian problem, the others by making an operator splitting to the generator and using a Baker--Campbell--Hausdorff-type approximation to the solution of the Fokker--Planck equation. We give computational examples using the MAP point and Laplace method to find approximate variances for the posterior distribution.

The second splitting prior lends itself to formulating an average of two landmark sets. We defined the average of two landmark sets via the prior and studied the limits as the  inverse temperature $\beta\to\infty$ (corresponding to the arithmetic average) and dissipation $\lambda\to 0$ (corresponding to the midpoint of the registration identified by the MAP point for the first splitting prior). This was extended to define an average for multiple landmark sets, with examples provided for both two and multiple landmark sets.

The work was limited by the current technology for sampling hypoelliptic diffusion bridges, and it will be interesting to see how this area develops.

Another avenue of future work is incorporating invariants into the prior distribution, such as conservation of average landmark position. The Langevin equation can be adjusted so that the dynamics live on a subspace of $\real^{2dN}$ where the Gibbs  distribution may be a probability measure and landmark average is  invariant. The following variation of \cref{eq:11} has invariant measure $\exp(-\beta H)$ and satisfies $\frac{d}{dt}\sum \vec p_i=\vec 0$ for isotropic $G$:
\begin{gather}
\begin{split}
  d\vec p_i%
  &= \Big[-\lambda \sum_{j\ne i} w(q_{ij})^2 \hat {\vec q}_{ij} \hat{ \vec
    q}_{ij}\cdot \nabla_{\vec p_i} H%
  - \nabla_{\vec    q_i}H\Big]\,dt%
  +
  \sigma  \sum_{j\ne i} w(q_{ij}) \hat {\vec q}_{ij}  dW_{ij}(t),\\
  \frac{d\vec q_i}{dt}%
  &=  \nabla_ {\vec p_i}H.
\end{split}  \label{eq:11b}
\end{gather}
Here $\hat{\vec q}_{ij}$ is the inter-particle unit vector
and $q_{ij}=\Vpair{\vec q_i-\vec q_j}$.  This time, $
W_{ij}(t)$ are \iid scalar Brownian motions for $i<j$ and
$W_{ij}=W_{ji}$.  Here $w\colon\real\to \real^+$ is a
coefficient function, which could be identically equal to
one for simplicity.  For given $\bar{\vec p}\in\real^d$, we see
$\exp(-\beta H)$ is an invariant measure on the subspace of
$\real^{2dN}$ with $\frac{1}{N}\sum_{i=1}^N\vec
p_i=\bar{\vec p}$ (the centre of mass is invariant for
$\bar{\vec p}=\vec 0$). This can be shown to be invariant by
using the Fokker--Planck equation as above, with $\lambda$
and $\sigma$ replaced by position-dependent coefficients that
still cancel out under the fluctuation--dissipation
relation. See \cite{dpd,Shardlow2003-qo,shardlow04:_geomet}.

\appendix
\section{Linearised equations}\label{sec:lin_dist}

 We write down equations to compute the mean and
  covariance, using backward and forward Euler
  approximations.  Suppose that  $\vec \delta_{n_1} \sim \Nrm ( \vec 0, C_1)$,
  for some given $C_1$. We wish to calculate the joint
  distribution of $\vec\delta_{n}$ for
  $n=0,\dots,N_{\tstep}$. This is easy to do as the joint
  distribution is Gaussian and we derive update rules for
  the mean and covariance:  From
  \[
  \vec \delta_{n+1} %
  = M^+_n \vec\delta_n + \vec A_n %
  + \begin{pmatrix} \sigma \Delta \vec{W}_n \\ %
    0 \end{pmatrix},
  \]
  we get an update rule for the mean
  \begin{align*}
    \vec\mu_{n+1}%
    = \mean{ \vec\delta_{n+1}} %
    = M^+_n\vec \mu_n +  \vec A_n.
  \end{align*}
  Similarly, when time-stepping backwards,
  \[
  \vec\mu_{n-1}%
  = \mean{ \vec\delta_{n-1}} %
  = M^-_n\vec \mu_n+ \vec A_n.
  \]
  For the covariance update along the diagonal moving forward,
  \begin{align*}
    \mean{\vec\delta_{n+1}\vec\delta_{n+1}^\trans} %
    &= \mean{\pp{M^+_n \vec\delta_n + \vec A_n \strutB} %
      \pp{M^+_n \vec\delta_n + \vec A_n \strutB}^\trans}%
    + \begin{pmatrix}\sigma h I_{dN} & 0 \\%
      0 &  0 \end{pmatrix}  \\
    &= M^+_n
    \mean{\vec\delta_n \vec\delta_n^\trans}%
    M^{+\trans}_n%
    +  \vec A_n { \vec\mu_{n+1}^\trans}%
    + \vec \mu_{n+1}\vec A_n^\trans %
    - \vec A_n \vec A_n^\trans%
    + \begin{pmatrix}\sigma h I_{dN} & 0 \\0& 0 \end{pmatrix}.
  \end{align*}
  Similarly, moving backwards,
  \begin{align*}
    \mean{\vec\delta_{n-1}\vec\delta_{n-1}^\trans} %
    &= \mean{\pp{M^-_n \vec\delta_n + \vec A_n \strutB} %
      \pp{M^-_n \vec\delta_n + \vec A_n \strutB}^\trans}%
    + \begin{pmatrix}\sigma h I_{dN} & 0 \\%
      0 &  0 \end{pmatrix}  \\
    &= M^-_n \mean{\vec\delta_n \vec\delta_n^\trans}%
    M^{-\trans}_n%
    + \vec A_n { \vec\mu_{n-1}^\trans}%
    + \vec \mu_{n-1}\vec A_n^\trans %
    - \vec A_n \vec A_n^\trans%
    + \begin{pmatrix}\sigma h I_{dN} & 0 \\%
      0& 0 \end{pmatrix}.
  \end{align*}
  The remaining parts of the matrix $\mean{\vec \delta_j
    \vec \delta_k^\trans}$ can be computed by sideways
  moves along either a row or column using the rules: if
  $k\ge j$,
  \begin{align*}
    \mean{\vec\delta_{j}\vec\delta_{k+1}^\trans}%
    &= \mean{ \vec\delta_j \vec\delta_k^\trans}M_k^{+\trans}%
    + \vec\mu_j \vec A_k^\trans,\\
    \mean{\vec\delta_{k+1}\vec\delta_{j}^\trans}%
      &= M^+_k\mean{ \vec\delta_k \vec\delta_j^\trans}%
      + \vec A_k\vec\mu_j^\trans,
    \end{align*}
    and if $k\le j$
    \begin{align*}
      \mean{\vec\delta_{j}\vec\delta_{k-1}^\trans} %
      &= \mean{ \vec\delta_j
        \vec\delta_k^\trans}M^{-\trans}_k%
      + \vec\mu_j \vec A_k^\trans,\\
      \mean{\vec\delta_{k-1}\vec\delta_{j}^\trans}%
      &= M^-_k\mean{ \vec\delta_k \vec\delta_j^\trans}%
      + \vec A_k\vec\mu_j^\trans.
    \end{align*}
    Finally,
    $
    \cov (\vec\delta_j, \vec\delta_n)%
    = \mean{\vec\delta_j \vec\delta_n^\trans}%
    -\vec\mu_j \vec\mu_n^\trans$.

  \def\bibfont{\normalfont\small}
\ifarxiv
 \bibliographystyle{siamplain}
\else
 \bibliographystyle{siamplain}
\fi
\bibliography{sde_imag}
\end{document}